\newcommand{\DG}[2]{%
  \ifdefined\FORDG
    #1
  \else
    #2
  \fi
}
\newcommand{\Z}{\mathbb{Z}} 
\newcommand{\R}{\mathbb{R}} 
\newcommand{\N}{\mathbb{N}} 
\newcommand{\F}{\mathbb{F}} 
\newcommand{\imagunit}{\mathrm{i}}
\newcommand{\twopii}{2 \pi \imagunit \;}
\newcommand{\setu}{\mathfrak{u}}
\newcommand{\setv}{\mathfrak{v}}
\DeclareMathOperator{\wal}{wal}
\newcommand{\floor}[1]{\lfloor #1 \rfloor}
\newcommand{\rd}{\,\mathrm{d}} 
\newcommand{\bszero}{\boldsymbol{0}} 
\newcommand{\bsh}{\boldsymbol{h}}    
\newcommand{\bsx}{\boldsymbol{x}}    
\newcommand{\bsy}{\boldsymbol{y}}    
\newcommand{\bsz}{\boldsymbol{z}}    
\newcommand{\bsDelta}{\boldsymbol{\Delta}}    
\newcommand{\bsgamma}{\boldsymbol{\gamma}}    
\newcommand{\bssigma}{\boldsymbol{\sigma}}    
\newcommand{\E}{\mathrm{e}} 
\newcommand{\tpmod}[1]{~(\operatorname{mod}{#1})} 
\newcommand{\X}{\textsf{x}}
\DeclareSymbolFont{bbold}{U}{bbold}{m}{n}
\DeclareSymbolFontAlphabet{\mathbbold}{bbold}
\newcommand{\ind}[1]{\mathbbold{1}_{#1}}
\def\citep#1#2{\cite[{#1}]{#2}}
\theoremstyle{plain}
  \newtheorem{theorem}{Theorem}
  \newtheorem{lemma}{Lemma}
  \newtheorem{corollary}{Corollary}
\theoremstyle{definition}
\theoremstyle{remark}
\newcommand{\RefEq}[1]{~\textup{(\ref{#1})}}
\newcommand{\RefEqTwo}[2]{~\textup{(\ref{#1})} and~\textup{(\ref{#2})}}
\newcommand{\RefSec}[1]{Section~\textup{\ref{#1}}}
\newcommand{\RefSecTwo}[2]{Sections~\textup{\ref{#1}} and~\textup{\ref{#2}}}
\newcommand{\RefThm}[1]{Theorem~\textup{\ref{#1}}}
\newcommand{\RefFig}[1]{Figure~\textup{\ref{#1}}}
\def\myabstract{
A comprehensive overview of lattice rules and polynomial lattice rules is given for function spaces based on $\ell_p$ semi-norms.
Good lattice rules and polynomial lattice rules are defined as those obtaining worst-case errors bounded by the optimal rate of convergence for the corresponding function space.
The focus is on algebraic rates of convergence $O(N^{-\alpha+\epsilon})$ for $\alpha \ge 1$ and any $\epsilon > 0$, where $\alpha$ is the decay of a series representation of the integrand function.
The dependence of the implied constant on the dimension can be controlled by weights which determine the influence of the different coordinates.
Different types of weights are discussed.
The construction of good lattice rules, and polynomial lattice rules, can be done using the same method for all $1 < p \le \infty$; but the case $p=1$ is special from the construction point of view.
For $1 < p \le \infty$ the component-by-component construction and its fast algorithm for different weighted function spaces is then discussed.
}
\def\myack{
The author wants to thank the anonymous referee, the editors and Gowri Suryanarayana for very carefully reading through this manuscript and making useful suggestions.
}
\title{The construction of good lattice rules \\ and polynomial lattice rules}
\abstract{
\myabstract
}
\keywords{Lattice rules, polynomial lattice rules, numerical integration, fast component-by-component construction}
\title{The construction of good lattice rules \\ and polynomial lattice rules}
\author{Dirk Nuyens \\ \normalsize Department of Computer Science \\ \normalsize KU~Leuven, Belgium}
\date{}
\begin{document}

\maketitle

\DG{}{
\begin{abstract}
\myabstract
\end{abstract}
}

\DG{}
{{
\renewcommand*{\thefootnote}{\fnsymbol{footnote}}
\footnotetext{\today}
}}

\section{Lattice rules and polynomial lattice rules}

The aim is to approximate multivariate integrals over the $s$-dimensional unit cube 
\begin{align*}
  I(f)
  &:=
  \int_{[0,1)^s} f(\bsx) \rd\bsx
\end{align*}
by equal-weight cubature rules of the form
\begin{align}\label{eq:QN}
  Q_N(f; \{\bsx_k\}_{k=0}^{N-1})
  &:=
  \frac1N \sum_{k=0}^{N-1} f(\bsx_k)
  .
\end{align}
It is well known that for all Riemann integrable functions $f$, $Q_N(f) \to I(f)$ for $N \to \infty$ if and only if the sequence $\{\bsx_k\}_{k=0}^\infty$ is uniformly distributed, see, e.g., \cite{KN74,Nie92}.
The measure of non-uniformity is called \emph{discrepancy}.
The $L_\infty$-star-discrepancy measures the discrepancy between the true uniform distribution and the point set by comparing boxes of the form $[\bszero, \bsx) = \prod_{j=1}^s [0,x_j)$ for $\bsx = (x_1,\ldots,x_s)$:
\begin{align*}
  D_N^*(\bsx_0, \ldots, \bsx_{N-1})
  &:=
  \sup_{\bsx \in [0,1]^s} \left| \frac{|\{\bsx_k\}_{k=0}^{N-1} \cap [\bszero,\bsx)|}{N} - \operatorname{vol}([\bszero,\bsx)) \right|
  .
\end{align*}
Point sets and sequences which have a discrepancy of $O(N^{-1} (\log N)^{s})$, or even $O(N^{-1} (\log N)^{s-1})$ in case of a finite point set, are called low-discrepancy point sets and sequences, see \cite{Nie92} for a general reference.

Two related families of such point sets are studied in this manuscript: lattice rules and polynomial lattice rules.
Some classical references on lattice rules are \cite{Kor63,SJ94,Nie92}.
Polynomial lattice rules follow a very similar construction procedure but are in fact a kind of digital net, see \cite{Nie92,DP2010,Pil2012} for a reference on those.
In both cases only rank-$1$ (polynomial) lattices will be considered, these are lattices generated by a single generating vector modulo the modulus of the point set (which is the number of points for lattice rules and a polynomial in the case of polynomial lattice rules).
In both cases, the generating vector will determine the quality of the lattice when used for numerical integration.
\RefFig{fig:lattices} depicts the node set of a lattice rule and of a polynomial lattice rule.

\begin{figure}
  \centering
  \includegraphics{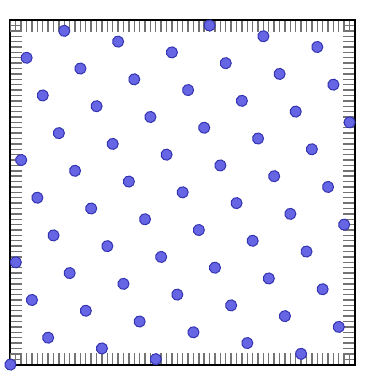}
  \qquad \qquad
  \includegraphics{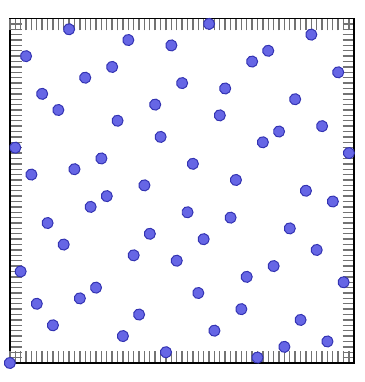}
  \caption{Left: a lattice rule with 64 points. Right: a polynomial lattice rule with 64 points.}\label{fig:lattices}
\end{figure}

\subsection{Lattice rules}

The points of a rank-$1$ \emph{lattice rule} with \emph{generating vector} $\bsz \in \Z_N^s$, with $\Z_N := \{0,\ldots,N-1\}$, are given by
\begin{align}\label{eq:lattice}
  \bsx_k
  &=
  \frac{\bsz k \bmod{N}}{N}
  ,
  &
  k = 0, \ldots, N-1
  ,
\end{align}
and its quality, for fixed $N$ and $s$, is fully determined by the choice of $\bsz$.
These point sets were introduced by Korobov \cite{Kor59} and Hlawka \cite{Hla62}; and were shown to have low discrepancy for a well chosen generating vector \cite{Kor67}.
When they minimise the error of numerical integration in some optimal way, see \RefSec{sec:ace}, they are called ``good'' lattice rules.

Often the number of points $N$ is taken to be prime to simplify proof techniques.
The lattice points form a vector space, where adding two points $\bsx_k + \bsx_\ell$ gives another point of the lattice and scalar multiplication is defined as $\ell \bsx_k = \bsx_{k \ell}$ for an integer $\ell$.
These operations can either be interpreted on the finite structure: modulo~$1$ when working with the points, or modulo~$N$ when working with the indices, as done above.
(Usually however these lattices are considered as tiling the whole space forming what is commonly understood as a ``lattice'', i.e., as a discrete infinite subset of $\R^s$.)
From the finite point of view, it makes sense to look at the range of $k$ to be numbers in $\Z_N$.

\subsection{Polynomial lattice rules}

A very similar point set, from an algebraic point of view, was introduced by Niederreiter \cite{Nie92b}, see also \cite{Nie92,DP2010}, where all the scalars in the above equation for a lattice rule\RefEq{eq:lattice} are replaced by polynomials over a finite field $\F_b[\X]$ and where $\X$ denotes the formal variable.
The lattice points are given by
\begin{align}\label{eq:poly-lattice}
  \bsx_k(\X)
  &=
  \frac{\bsz(\X) k(\X) \bmod P(\X)}{P(\X)}
  \in (\F_b((\X^{-1})))^{s}
  ,
\end{align}
for $k(\X) \in G_{b,m} := \{ k(\X) \in \F_b[\X] : \deg(k) < m \}$, 
and $\deg(0) = -\infty$ by convention.
The formal Laurent series $w(\X) \in \F_b((\X^{-1}))$ are to be understood as $w(\X) = \sum_{i \ge \ell} w_i \, \X^{-i}$ with coefficients taking values in $\F_b$.
The number of points is $N = b^m$ and, typically, $m = \deg(P)$.
Similar to above, $P(\X)$ is mostly chosen irreducible over $\F_b[\X]$.
The ``points'' are polynomials over $\F_b$ with negative powers in $\X$, or more specifically, Laurent series, where the $j$th dimension of the point $\bsx_k$ is given by
\begin{align*}
  x_{k,j}(\X)
  &=
  \sum_{i \ge 1} x_{k,j,i} \, \X^{-i}
\end{align*}
and the coefficients $x_{k,j,i} \in \F_b$ are the Laurent coefficients of the polynomial division from\RefEq{eq:poly-lattice} over the finite field $\F_b$.
Niederreiter \cite{Nie92} showed that the above structure can be mapped to real numbers in $[0,1)^s$, having the structure of a digital net in base~$b$.
For fixed $N$ and $s$, the quality of this point set is fully determined by the vector of polynomials $\bsz(\X)$.

Some more notation is needed.
The coefficients of a polynomial can be interpreted as a vector over $\F_b$ and as a scalar in $\Z_{b^m}$:
for $k(\X) \in \F_b[\X]/P(\X)$ interpret 
\begin{multline*}
  k(\X) = k_{m-1} \X^{m-1} + \cdots + k_0 \X^0
  \DG{ \\ }{}
  \quad\simeq\quad
  \vec{k} = (k_0, \ldots, k_{m-1})^{\top} \in \F_b^m
  \DG{ \\ }{}
  \quad\simeq\quad
  k = \sum_{i=0}^{m-1} \eta(k_i) \, b^i \in \Z_{b^m}
  ,
\end{multline*}
where $\eta : \F_b \to \Z_b$ is a bijection which can be the trivial map if $b$ is prime.
Similarly, a Laurent series $x(\X) \in \F_b((\X^{-1}))$ can be interpreted as a vector over $\F_b$ and as a scalar in $[0,1)$:
\begin{align*}
  x(\X) &= \sum_{i \ge 1} x_i \, \X^{-i}
  &\simeq&&
  \vec{x} &= (x_1, \ldots, x_n, \ldots)^{\top} \in \F_b^\infty
  &\simeq&&
  x = \sum_{i \ge 1} \theta(x_i) \, b^{-i}
  ,
\end{align*}
again with a bijection $\theta : \F_b \to \Z_b$.
To simplify the presentation it will be assumed that $b$ is prime and thus the mapping in both cases can be taken as the canonical map.
Sometimes below, for a positive integer $M$, this infinite expansion will be truncated at $\X^{-M}$, or $\X^M$ when considering arbitrarily large integers, and the truncated versions will be denoted by $[x(\X)]_M = \sum_{i=1}^M x_i \, \X^{-i}$, $[\vec{x}]_M = (x_1, \ldots, x_M)^{\top} \in \F_b^M$ and $[x]_M = \sum_{i=1}^M x_i \, b^{-i}$.

For prime $b$ the polynomial lattice points\RefEq{eq:poly-lattice} can be mapped to the unit cube by ``evaluating'' the polynomial point up to some precision $n$, typically $n=m$, i.e.,
\begin{align*}
  y_{k,j}
  =
  [ x_{k,j} ]_n
  =
  [ x_{k,j}(b) ]_n
  &=
  \sum_{i = 1}^n x_{k,j,i} \, b^{-i}
  .
\end{align*}
The rule\RefEq{eq:QN} using the points $\{ \bsy_k \}_{k=0}^{b^m-1}$, where $\bsy_k = (y_{k,1},\ldots,y_{k,s})$, is called a (rank-$1$) \emph{polynomial lattice rule}.

The process described above is equivalent to the \emph{digital construction scheme} from Niederreiter, see, e.g., \cite{Nie92,DP2010}, using specific \emph{generating matrices} $C_j(z_j, P) = C_j \in \F_b^{n \times m}$ where the matrix $C_j = ( c_{j,r,t} )_{1 \le r \le n, 1 \le t \le m}$ is given by $c_{j,r,t} = a_{j,r+t-1}$ and the $a_{j,i}$ are the coefficients of $a_j(\X) = z_j(\X) / P(\X) = \sum_{i\ge1} a_{j,i} \, \X^{-i} \in \F_b(\X^{-1})$.
The points are then generated by
\begin{align*}
  \vec{y}_{k,j}
  =
  [\vec{x}_{k,j}]_n
  &=
  C_j \, \vec{k}
  ,
\end{align*}
with $\vec{k} \in \F_b^m$.

For \emph{higher-order polynomial lattice rules} the precision is taken to be $n = \alpha m$ where the integer $\alpha \ge 1$ will denote the order of convergence $O(N^{-\alpha+\epsilon})$, $\epsilon > 0$. 

\section{The worst-case error}\label{sec:ace}

The idea now is to find ``optimal'' generating vectors.
For this, define the error of approximating the integral by a lattice rule or a polynomial lattice rule $Q_N(\cdot; \bsz)$,
\begin{align*}
  E_N(f; \bsz)
  &:=
  Q_N(f; \bsz) - I(f)
  .
\end{align*}
Note that the dependency on $\bsz(\X)$, $P(\X)$ and $N=b^m$ for the polynomial lattice rule is suppressed by referring to just $\bsz$ and $N$ as in the lattice rule case.
By assuming certain properties on the function $f$ the quantity $|E(f; \bsz)|$ can be bounded from above (and below) with respect to the worst possible function satisfying the assumed properties.
An upper bound can be obtained by applying Hölder's inequality, as will be shown in \RefThm{thm:KH}.
A similar exposition of this analysis can be found in \cite{Hic98a,Hic98}.

\subsection{Koksma--Hlawka error bound}

The assumptions on $f$ are formalized by a weighted $\ell_p$ semi-norm $|||f|||_{p,\alpha,\bsgamma} \le 1$, where $\alpha$ expresses how quickly a certain series expansion of $f$ converges and $\bsgamma = \{ \gamma_\setu \}_{\setu \subseteq \{1:s\}}$ is a set of non-negative weights which determine the influence of the different dimensions.
The notation $\{1:s\}$ is a shorthand for $\{1,\ldots,s\}$.
For classical spaces, i.e., unweighted spaces, all weights are unity, $\gamma_\setu \equiv 1$.
A discussion on the weights is deferred until \RefSec{sec:weights}.
The \emph{worst-case error} of integrating such $f$, with $|||f|||_{p,\alpha,\bsgamma} \le 1$, can now be defined as
\begin{align}\label{eq:wce}
  e(\bsz, N; |||\cdot|||_{p,\alpha,\bsgamma})
  &:=
  \sup_{|||f|||_{p,\alpha,\bsgamma} \le 1} |Q_N(f; \bsz) - I(f)|
  .
\end{align}
The following theorem shows an upper bound for functions expressed in a certain basis $\{ \varphi_{\bsh} \}_{\bsh \in \Lambda}$.
This basis is mostly the Fourier basis, with $\varphi_{\bsh}(\bsx) = \exp(\twopii \bsh \cdot \bsx)$ and $\Lambda = \Z^s$, when discussing lattice rules and the Walsh basis, with $\varphi_{\bsh}(\bsx) = \wal_{b,\bsh}(\bsx)$ and $\Lambda = \N_0^s = \{0,1,\ldots\}^s$, when discussing polynomial lattice rules.
The details will follow in \RefSecTwo{sec:lattice-rules}{sec:poly-lattice-rules}.
\begin{theorem}[Koksma--Hlawka error bound]\label{thm:KH}
  Suppose $f$ can be expressed as an absolutely convergent series by a basis $\{ \varphi_{\bsh} \}_{\bsh \in \Lambda}$, 
  $$
    f(\bsx)
    =
    \sum_{\bsh \in \Lambda} \hat{f}_{\bsh} \, \varphi_{\bsh}(\bsx)
    ,
  $$
  for an index set $\Lambda \subseteq \Z^s$, $\bszero \in \Lambda$, with $\varphi_{\bszero} = 1$ and such that $\hat{f}_{\bszero} = \int_{[0,1)^s} f(\bsx) \rd\bsx = I(f)$.
  Then
  \begin{align}\label{eq:error}
    E_N(f; \bsz)
    &=
    \sum_{\bszero \ne \bsh \in \Lambda} \hat{f}_{\bsh} \frac1N \sum_{k=0}^{N-1} \varphi_{\bsh}(\bsx_k)
    .
  \end{align}
  Furthermore assume a real valued function, the ``decay function'', $r_{\alpha,\bsgamma}(\bsh) > 0$ for all $\bsh \in \Lambda$, and, for $1 \le p < \infty$, define
  \begin{align*}
    ||| f |||_{p,\alpha,\bsgamma}^p
    &:=
    \sum_{\bszero \ne \bsh \in \Lambda} |\hat{f}_{\bsh}|^p \, r_{\alpha,\bsgamma}(\bsh)^p
    ,
  \end{align*}
  and for $p = \infty$ define
  \begin{align*}
    ||| f |||_{\infty,\alpha,\bsgamma}
    &:=
    \sup_{\bszero \ne \bsh \in \Lambda} |\hat{f}_{\bsh}| \, r_{\alpha,\bsgamma}(\bsh)
    .
  \end{align*}
  Then for $\tfrac1p + \tfrac1q = 1$ 
  \begin{align*}
    E_N(f; \bsz)
    &\le
    ||| f |||_{p,\alpha,\bsgamma} \;
    e(\bsz, N; |||\cdot|||_{p,\alpha,\bsgamma})
  \end{align*}
  where for $1 < p \le \infty$
  \begin{align}\label{eq:wce-p}
    e(\bsz, N; |||\cdot|||_{p,\alpha,\bsgamma})
    &=
    \left(\sum_{\bszero \ne \bsh \in \Lambda}  r_{\alpha,\bsgamma}(\bsh)^{-q} \left| \frac1N \sum_{k=0}^{N-1} \varphi_{\bsh}(\bsx_k) \right|^q\right)^{1/q}
    ,
  \end{align}
  or for $p=1$ and $q=\infty$
  \begin{align}\label{eq:wce-infty}
    e(\bsz, N; |||\cdot|||_{1,\alpha,\bsgamma})
    &=
    \sup_{\bszero \ne \bsh \in \Lambda}  r_{\alpha,\bsgamma}(\bsh)^{-1} \left| \frac1N \sum_{k=0}^{N-1} \varphi_{\bsh}(\bsx_k) \right|
    .
  \end{align}
\end{theorem}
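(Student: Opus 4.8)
The plan is to establish the three ingredients in turn: the error representation \RefEq{eq:error}, the H\"older upper bound, and the identification of the supremum defining the worst-case error in \RefEq{eq:wce} with the explicit expressions \RefEqTwo{eq:wce-p}{eq:wce-infty}. First I would derive \RefEq{eq:error} directly. Substituting the series for $f$ into $Q_N(f;\bsz) = \tfrac1N\sum_{k=0}^{N-1} f(\bsx_k)$ and interchanging the two summations, which is justified by the assumed absolute convergence, gives
$$
  Q_N(f;\bsz)
  = \sum_{\bsh \in \Lambda} \hat{f}_{\bsh} \, \frac1N \sum_{k=0}^{N-1} \varphi_{\bsh}(\bsx_k).
$$
The term $\bsh = \bszero$ contributes $\hat{f}_{\bszero} = I(f)$ because $\varphi_{\bszero} \equiv 1$, so subtracting $I(f)$ removes exactly this term and leaves \RefEq{eq:error}.

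Next, for the upper bound I would insert the factor $r_{\alpha,\bsgamma}(\bsh)/r_{\alpha,\bsgamma}(\bsh) = 1$ into each summand of \RefEq{eq:error}, apply the triangle inequality, and then H\"older's inequality with conjugate exponents $p,q$ to the two factors $|\hat{f}_{\bsh}| \, r_{\alpha,\bsgamma}(\bsh)$ and $r_{\alpha,\bsgamma}(\bsh)^{-1} |\tfrac1N\sum_{k=0}^{N-1} \varphi_{\bsh}(\bsx_k)|$. The first factor assembles into $|||f|||_{p,\alpha,\bsgamma}$ and the second into the right-hand side of \RefEq{eq:wce-p}, respectively the supremum in \RefEq{eq:wce-infty} in the limiting case $p=1$, $q=\infty$. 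This yields $E_N(f;\bsz) \le |||f|||_{p,\alpha,\bsgamma} \, e(\bsz,N;|||\cdot|||_{p,\alpha,\bsgamma})$, and taking the supremum over $|||f|||_{p,\alpha,\bsgamma} \le 1$ shows that the worst-case error \RefEq{eq:wce} is \emph{at most} the stated expression.

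The remaining and more delicate step is to prove that this bound is attained, so that \RefEq{eq:wce} \emph{equals} \RefEqTwo{eq:wce-p}{eq:wce-infty} rather than merely being dominated by them; this is precisely the equality case of H\"older's inequality, equivalently $\ell_p$--$\ell_q$ duality. Writing $S_{\bsh} := \tfrac1N\sum_{k=0}^{N-1}\varphi_{\bsh}(\bsx_k)$, I would construct an extremal integrand by choosing each coefficient $\hat{f}_{\bsh}$ proportional to $\overline{S_{\bsh}}$, so that every term of \RefEq{eq:error} is real and non-negative, with magnitude dictated by the H\"older equality condition: $|\hat{f}_{\bsh}| \, r_{\alpha,\bsgamma}(\bsh) \propto (r_{\alpha,\bsgamma}(\bsh)^{-1}|S_{\bsh}|)^{q-1}$ for $1<p<\infty$, the saturating choice $|\hat{f}_{\bsh}|\,r_{\alpha,\bsgamma}(\bsh) \equiv 1$ for $p=\infty$, and a sequence concentrating on an index nearly maximising $r_{\alpha,\bsgamma}(\bsh)^{-1}|S_{\bsh}|$ for $p=1$. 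After normalising so that $|||f|||_{p,\alpha,\bsgamma} = 1$, this $f$ makes $E_N(f;\bsz)$ equal to, or in the boundary cases arbitrarily close to, the claimed value.

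I expect the main obstacle to be the legitimacy of this extremal function when $\Lambda$ is infinite: the construction must produce coefficients for which the defining series is absolutely convergent, which since $|\varphi_{\bsh}(\bsx)| \le 1$ follows once $\sum_{\bsh}|\hat{f}_{\bsh}| < \infty$, so that $f$ is a genuine admissible integrand. This in turn requires the explicit expressions \RefEqTwo{eq:wce-p}{eq:wce-infty} to be finite, which hinges on sufficiently fast decay of $r_{\alpha,\bsgamma}(\bsh)^{-1}$, and the two endpoints $p=1$ and $p=\infty$ need separate treatment, the former through an approximating sequence rather than an exact maximiser.
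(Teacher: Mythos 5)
Your first two steps are exactly the paper's proof: expand $f$ in the series, interchange the sums (justified by absolute convergence) so that the $\bsh=\bszero$ term cancels $I(f)$, insert the factor $r_{\alpha,\bsgamma}(\bsh)\,r_{\alpha,\bsgamma}(\bsh)^{-1}$ as in\RefEq{eq:holder}, and apply H\"older. The paper's proof stops there, so it only establishes the \emph{inequality} $E_N(f;\bsz)\le |||f|||_{p,\alpha,\bsgamma}\,e(\bsz,N;|||\cdot|||_{p,\alpha,\bsgamma})$ with $e$ bounded by the right-hand sides of\RefEqTwo{eq:wce-p}{eq:wce-infty}; the identification of the supremum\RefEq{eq:wce} with those explicit expressions is not argued inside the proof but in the discussion following it (\RefSec{sec:lattice-rules}), where the extremal function $\xi$ of\RefEq{eq:wci} is written down explicitly --- and only for (polynomial) lattice rules, where the character sums $S_{\bsh}=\frac1N\sum_k\varphi_{\bsh}(\bsx_k)$ take only the values $0$ and $1$, so the phase-alignment issue is vacuous and the H\"older equality case is immediate. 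Your third step is therefore a genuine addition rather than a redundancy: the general $\ell_p$--$\ell_q$ duality construction ($\hat f_{\bsh}$ proportional to $\overline{S_{\bsh}}$ with modulus fixed by the equality condition $|\hat f_{\bsh}|\,r_{\alpha,\bsgamma}(\bsh)\propto (r_{\alpha,\bsgamma}(\bsh)^{-1}|S_{\bsh}|)^{q-1}$, a saturating unimodular choice for $p=\infty$, and an approximating sequence for $q=\infty$) is what is needed to justify the ``$=$'' signs in\RefEqTwo{eq:wce-p}{eq:wce-infty} for arbitrary bases and point sets, and your caveat about admissibility of the extremal integrand (absolute convergence of its series, which reduces to finiteness of sums of the form $\sum_{\bsh}r_{\alpha,\bsgamma}(\bsh)^{-q}$) is precisely the condition $\alpha>1/q$ that the paper assumes throughout. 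In short: same core argument, but you supply a completeness step that the paper delegates to the surrounding text and carries out only in a special case.
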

\begin{proof}
  Straightforwardly, as the series converges absolutely and point wise, $f$ can be expanded and the sums interchanged
  \begin{align}\notag
    Q_N(f; \bsz) - I(f)
    &=
    \frac1N \sum_{k=0}^{N-1} \sum_{\bsh \in \Lambda} \hat{f}_{\bsh} \, \varphi_{\bsh}(\bsx_k) - I(f)
    \\\label{eq:holder}
    &=
    \sum_{\bszero \ne \bsh \in \Lambda} \hat{f}_{\bsh} \, r_{\alpha,\bsgamma}(\bsh) \, r_{\alpha,\bsgamma}(\bsh)^{-1} \frac1N \sum_{k=0}^{N-1} \varphi_{\bsh}(\bsx_k)
    ,
  \end{align}
  from which the results follow by applying the H\"older inequality.
\end{proof}

Some remarks are in order.
In the above theorem the decay function $r_{\alpha,\bsgamma}(\bsh)$ is there to control how quickly the series representation converges.
Obviously, the theorem only makes sense whenever $|||f|||_{p,\alpha,\bsgamma} < \infty$ and this condition then defines a function space for which the worst-case error rate holds.
The case $r_{\alpha,\bsgamma}(\bsh) = 0$ for some $\bsh$ is allowed by excluding those $\bsh$ from the index set $\Lambda$.
The sum
\begin{align}\label{eq:sum-basis-funs}
  \frac1N \sum_{k=0}^{N-1} \varphi_{\bsh}(\bsx_k)
  ,
\end{align}
which appearaned in\RefEqTwo{eq:wce-p}{eq:wce-infty}, plays a crucial role and will be of use in the next subsections.
All results will be presented for equal weight cubature rules\RefEq{eq:QN}, but similar results, more involved, can be derived for more general cubature rules $Q_N(f) = \sum_{k=0}^{N-1} w_k f(\bsx_k)$.
E.g., if $\sum_{k=0}^{N-1} w_k= 1$ then the above theorem holds with\RefEq{eq:sum-basis-funs} replaced by
\begin{align*}
  \sum_{k=0}^{N-1} w_k \, \varphi_{\bsh}(\bsx_k)
  .
\end{align*}
In what follows the set of functions $\{\varphi_{\bsh}\}_{\bsh \in \Lambda}$ will be mostly the Fourier basis for lattice rules, but also the cosine basis in \RefSec{sec:tent}, and the Walsh basis for polynomial lattice rules.
For more arbitrary $\varphi_{\bsh}$ it is assumed that if $h_j = 0$ then $\varphi_{\bsh}(\bsx)$ is independent of $x_j$, which is denoted as ``zero-neutral'' in this manuscript; otherwise the weights will not make sense as the weight $\gamma_{\setu}$ is only supposed to model the influence of the variables $\{x_j\}_{j \in \setu}$.
The most natural form is a product basis $\varphi_{\bsh}(\bsx) = \prod_{j=1}^s \varphi^{(j)}_{h_j}(x_j)$ with $\varphi^{(j)}_{0} = 1$, which automatically fulfills this assumption.
Similarly also the decay function $r_{\alpha,\bsgamma}(\bsh)$ should be ``zero neutral'' for the same reason.

\subsection{Lattice rules}\label{sec:lattice-rules}

For lattice rules it turns out to be convenient to work with some kind of Fourier space, i.e., a function space based on Fourier series expansions
\begin{align}\label{eq:fourier-series}
  f(\bsx)
  &=
  \sum_{\bsh \in \Z^s} \hat{f}_{\bsh} \, \E^{\twopii \bsh \cdot \bsx}
  ,
  &
  \text{where }
  \hat{f}_{\bsh}
  &=
  \int_{[0,1)^s} f(\bsx) \, \E^{-\twopii \bsh \cdot \bsx} \rd\bsx
  .
\end{align}
The sum\RefEq{eq:sum-basis-funs} then reduces to
\begin{align}\label{eq:character-prop}
  \frac1N \sum_{k=0}^{N-1} \E^{\twopii (h_1 z_1 + \cdots + h_s z_s) \, k / N}
  &=
  \begin{cases}
    1 & \text{if } h_1 z_1 + \cdots + h_s z_s \equiv 0 \pmod{N} , \\
    0 & \text{otherwise}.
  \end{cases}
\end{align}
This is known as the \emph{character property} of $\Z_N$.
Note the following similar one-dimensional sum based on the character property, where for $N$ prime and $0 \le k < N$
\begin{align}\label{eq:sum-over-ZN}
  \frac1N \sum_{z = 0}^{N-1} \E^{\twopii h z k / N}
  &=
  \begin{cases}
    1 & \text{if } k = 0 \text{ or } h \equiv 0 \pmod{N} , \\
    0 & \text{otherwise}.
  \end{cases}
\end{align}
This will become of use in \RefThm{thm:cbc} on the component-by-component construction.
Those $\bsh \in \Z^s$ which fulfill the condition $\bsh \cdot \bsz \equiv 0 \pmod{N}$ from\RefEq{eq:character-prop} are elements of the \emph{dual lattice} denoted by $L^\perp = L^\perp(\bsz, N)$.
Thus
\begin{align*}
  E_N(f; \bsz)
  &=
  \sum_{\bszero \ne \bsh \in L^\perp} \hat{f}_{\bsh}
  .
\end{align*}
It follows that the worst-case error, for a lattice rule in a Fourier space, is given by, with $1 < p \le \infty$,
\begin{align}\label{eq:wce-lattice-rule}
  e(\bsz, N; |||\cdot|||_{p,\alpha,\bsgamma})
  &=
  \left( \sum_{\bszero \ne \bsh \in L^\perp} r_{\alpha,\bsgamma}(\bsh)^{-q} \right)^{1/q}
  ,
\end{align}
where $q=p/(p-1)$, and for $p=1$ and $q=\infty$,
\begin{align}\label{eq:wce-lattice-rule-infty}
  e(\bsz, N; |||\cdot|||_{1,\alpha,\bsgamma})
  &=
  \sup_{\bszero \ne \bsh \in L^\perp} r_{\alpha,\bsgamma}(\bsh)^{-1}
  .
\end{align}

Choosing algebraically decaying Fourier modes on hyperbolic cross isolines (also called Zaremba crosses), $$r_{\alpha,\bsgamma}(\bsh) = \prod_{j=1}^s \max(1, |h_j|)^{\alpha},$$ where all $\gamma_{\setu} \equiv 1$, then results in the classical \emph{Korobov class} of functions \cite{Kor59,Kor60} when $p=\infty$.
The bound from above for the case $p = 2$ and $q = 2$, ``the Hilbert case'', which is studied often in current literature in the form of \emph{reproducing kernel Hilbert spaces}, see, e.g., \cite{DKS2013} for a recent overview,  gives
\begin{align*}
  E_N(f; \bsz)
  &\le
  \left( \sum_{\bszero \ne \bsh \in \Z^s} |\hat{f}_{\bsh}|^2 \prod_{j=1}^s \max(1, |h_j|)^{2\alpha} \right)^{\!\!1/2} 
  \DG{\hspace{-1mm}}{}
  \left( \sum_{\bszero \ne \bsh \in L^\perp} \prod_{j=1}^s \max(1, |h_j|)^{-2\alpha}  \right)^{\!\!1/2}
  .
\end{align*}
It is interesting to compare this to the bound for $p=\infty$ and $q=1$
\begin{align*}
  E_N(f; \bsz)
  &\le
  \left( \sup_{\bszero \ne \bsh \in \Z^s} |\hat{f}_{\bsh}| \, \prod_{j=1}^s \max(1, |h_j|)^\alpha \right)
  \left( \sum_{\bszero \ne \bsh \in L^\perp} \prod_{j=1}^s \max(1, |h_j|)^{-\alpha} \right)
  ,
\end{align*}
where the classic quantity $P_\alpha$ occurs,
\begin{align*}
  P_\alpha(\bsz, N)
  &:=
  \sum_{\bszero \ne \bsh \in L^\perp} \prod_{j=1}^s \max(1, |h_j|)^{-\alpha}
  ,
\end{align*}
as it is defined in, e.g., \cite{Nie92,SJ94}.
This is the quantity of interest in Korobov's first papers, e.g., \cite{Kor59,Kor60}.
Korobov assumes the functions to satisfy, for $\bsh \ne \bszero$,
\begin{align*}
  |\hat{f}(\bsh)|
  &\le
  c \, \prod_{j=1}^s \max(1, |h_j|)^{-\alpha}
\end{align*}
for some fixed positive constant $c$ and denotes this class by $E_\alpha^s(c)$.
This condition is equivalent to asking $|||f|||_{\infty,\alpha,\bsgamma} \le c$ for this choice of $r_{\alpha,\bsgamma}$.
Thus $P_\alpha$ is the worst-case error for the semi-norm based on the $\ell_\infty$ norm whilst for the popular $\ell_2$ case the worst-case error is given by $(P_{2\alpha}(\bsz, N))^{1/2}$.
A more general statement including weights will be given later by\RefEq{eq:Palpha-equivalence}.
From the one-dimensional case, using\RefEq{eq:wce-lattice-rule} with $N$ prime, it is clear that $\alpha > 1/q$ is required for the sums to converge, i.e., 
\begin{align}\label{eq:latrule-alpha}
  \left(\sum_{0 \ne h \in N\Z} |h|^{-q\alpha} \right)^{1/q} 
  &= 
  (2\zeta(q\alpha))^{1/q} N^{-\alpha}
  .
\end{align}
So, to keep $\zeta(q\alpha) < \infty$, it is needed that $\alpha > 1/2$ for the $\ell_2$ case and $\alpha > 1$ for $\ell_\infty$, i.e., the class $E_{\alpha}^s(c)$.

The bounds from above are all attainable.
For a rank-$1$ lattice rule, and $1 < p \le \infty$, take the function
\begin{align}\label{eq:wci}
  \xi(\bsx)
  =
  \xi(\bsx; \bsz, N, |||\cdot|||_{p,\alpha,\bsgamma})
  &=
  \sum_{\bszero \ne \bsh \in L^\perp}
  r_{\alpha,\bsgamma}(\bsh)^{-q}
  \, \E^{\twopii \bsh \cdot \bsx}
  ,
\end{align}
which depends on the point set (i.e., $\bsz$ and $N$), the smoothness $\alpha$, the weights $\bsgamma = \{ \gamma_{\setu} \}_{\setu \subseteq \{1:s\}}$ and the choice of $p$ and $q$,
and which has semi-norm, for $1 < p < \infty$, 
\begin{multline*}
  |||\xi|||_{p,\alpha,\bsgamma}^p
  =
  \sum_{\bszero \ne \bsh \in L^\perp}
  r_{\alpha,\bsgamma}(\bsh)^{-p q} \, r_{\alpha,\bsgamma}(\bsh)^{p}
  \\=
  \sum_{\bszero \ne \bsh \in L^\perp}
  r_{\alpha,\bsgamma}(\bsh)^{-p(q-1)}
  =
  \sum_{\bszero \ne \bsh \in L^\perp}
  r_{\alpha,\bsgamma}(\bsh)^{-q}
  ,
\end{multline*}
and $|||\xi|||_{\infty,\alpha,\bsgamma} = 1$ for $p = \infty$,
and thus, since $\tfrac1p + \tfrac1q = 1$, the Hölder inequality is turned into an equality, as, for $1 < p < \infty$,
\begin{multline*}
  \left(\sum_{\bszero \ne \bsh \in L^\perp}
  r_{\alpha,\bsgamma}(\bsh)^{-q}\right)^{1/p}
  \left(\sum_{\bszero \ne \bsh \in L^\perp} r_{\alpha,\bsgamma}(\bsh)^{-q}\right)^{1/q}
  \\=
  \sum_{\bszero \ne \bsh \in L^\perp}
  r_{\alpha,\bsgamma}(\bsh)^{-q}
  =
  E_N(\xi; \bsz)
  =
  \left(e(\bsz, N; |||\cdot|||_{p,\alpha,\bsgamma})\right)^q
  .
\end{multline*}
Scaling $\xi$ by $|||\xi|||_{p,\alpha,\bsgamma}^{-1}$ then gives a function with semi-norm~1 and error exactly equal to the worst-case error.
For $p=\infty$ and $q=1$ the worst-case error bound is clearly an equality as then $|||\xi|||_{\infty,\alpha,\bsgamma} = 1$.

For calculating the worst-case error, the function $\xi(\bsx; \bsz, N, |||\cdot|||_{p,\alpha,\bsgamma})$ for all $1 < p \le \infty$ can be replaced by a function which does not depend on the point set but only on the function space, being $p$, $\alpha$ and $\bsgamma$, namely, 
\begin{align}\label{eq:chi}
  \chi(\bsx; |||\cdot|||_{p,\alpha,\bsgamma})
  &=
  \sum_{\bszero \ne \bsh \in \Z^s}
  r_{\alpha,\bsgamma}(\bsh)^{-q}
  \, \E^{\twopii \bsh \cdot \bsx}
  ,
\end{align}
since the errors are the same, i.e., $E_N(\xi; \bsz) = E_N(\chi; \bsz)$.
This property can be used to calculate the worst-case error for $1 < p \le \infty$ and will be of use in \RefSec{sec:construction}.

Now consider $p=1$ and $q=\infty$.
The function $\xi$ can here be constructed by choosing any $\bsh^\star \in L^\perp$ for which $\sup_{\bszero \ne \bsh \in L^\perp} r_{\alpha,\bsgamma}(\bsh)^{-1} = r_{\alpha,\bsgamma}(\bsh^\star)^{-1}$.
There are always at least two choices of $\bsh^\star$ as, if $\bsh^\star \in L^\perp$, then so is $-\bsh^\star$.
The function
\begin{align*}
  \xi(\bsx; \bsz, N, |||\cdot|||_{1,\alpha,\bsgamma})
  &=
  r_{\alpha,\bsgamma}(\bsh^\star)^{-1}
  \, \E^{\twopii \bsh^\star \cdot \bsx}
\end{align*}
then has semi-norm $|||\xi|||_{1,\alpha,\bsgamma} = 1$.
The error $E_N(\xi; \bsz, N, |||\cdot|||_{1,\alpha,\bsgamma}) = r_{\alpha,\bsgamma}(\bsh^\star)^{-1}$  equals the worst-case error for $q=\infty$ by definition.
There is however no function $\chi$ which is independent of the point set as there is for the other choices of $p$ and $q$.
This means that the worst-case error cannot be computed in a comfortable way for $q=\infty$.
Simply iterating over $\bszero \ne \bsh \in L^\perp$ ordered on $r_{\alpha,\bsgamma}(\bsh)^{-1}$ to find the first $\bsh^\star$ for fixed $\bsz$ and $N$ has exponential complexity for most classical choices of $r_{\alpha,\bsgamma}$, see, e.g., \cite{CKN2010}, and also \cite{CN2008}, and the references therein for alternative strategies.


For the choice of $r_{\alpha,\bsgamma}$ as given above, the worst-case error for $q=\infty$ is directly related to the \emph{Zaremba index}, or Zaremba figure of merit, see, e.g., \cite{SJ94,Nie92}, which is defined, with all $\gamma_\setu \equiv 1$, as
\begin{align}\label{eq:Zaremba-index}
  \rho(\bsz, N)
  &:=
  \min_{\bszero \ne \bsh \in L^\perp} r_{1,\bsgamma}(\bsh)
  ,
\end{align}
and for which larger values denote better lattice rules.
The worst-case error is then given by $\rho(\bsz, N)^{-\alpha}$.
Such figures of merit are related to the classical concept of \emph{degree of precision} which has been studied in, e.g., \cite{CKN2010,AN2012}.
It can be seen that smaller $p$ will shrink the unit ball on which the worst-case error\RefEq{eq:wce} is defined, since $|||f|||_{r,\alpha,\bsgamma} \ge |||f|||_{r',\alpha,\bsgamma}$ for any $1 \le r \le r' \le \infty$.
The case $p=1$ can be considered as the limit of $r \to 1$, which means $q \to \infty$ and then the series expansion\RefEq{eq:wci} needs to converge faster than at an algebraic rate in the limit.
This naturally leads to the recently studied exponentially converging function spaces as in \cite{DLPW2011,KPW2013}.
Similarly, the method in \cite{AN2012} is based on exponentially converging series to construct lattice rules with good \emph{trigonometric degree}.

It follows that, for $1 \le p \le \infty$,
\begin{align*}
  e(\bsz, N; |||\cdot|||_{1,\alpha,\bsgamma})
  \le
  e(\bsz, N; |||\cdot|||_{p,\alpha,\bsgamma})
  \le
  e(\bsz, N; |||\cdot|||_{\infty,\alpha,\bsgamma})
  ,
\end{align*}
and so upper bounds for $\ell_\infty$ also hold for smaller $p$ and lower bounds for $\ell_1$ also hold for larger $p$, see also \cite{Woz2009}, which states that ``multivariate  integration over $F_\infty$ is no easier than over $F_2$'' (where $F_\infty$ refers to the space using the $\ell_\infty$ norm and likewise for $F_2$).
Because of the rather different nature of the case $p=1$ and $q=\infty$, the construction part will only discuss $1 < p \le \infty$.

\subsection{Polynomial lattice rules}\label{sec:poly-lattice-rules}

Like Fourier series work naturally with lattice rules, so do Walsh series (in base~$b$) for polynomial lattice rules (in base~$b$).
Define $\N := \{1,2,\ldots\}$ and $\N_0 := \{0,1,2,\ldots\}$.
The one-dimensional Walsh functions in base~$b$ are defined as
\begin{align*}
  \wal_{b,h}(x)
  &:=
  \E^{\twopii (x_1 h_0 + x_2 h_1 + \cdots + x_n h_{n-1}) / b}
  =
  \E^{\twopii [\vec{h}]_n^\top [\vec{x}]_n / b}
\end{align*}
for $x \in [0,1)$ and $h \in \N_0$ and the unique base~$b$ expansions $x = \sum_{i\ge1} x_i \, b^{-i} = (0.x_1 x_2 \ldots)_b$ and $h = \sum_{i\ge0} h_i \, b^{i} = (h_{n-1} \cdots h_1 h_0)_b$, with $n$ at least as large as the number of digits to represent $x$ or $h$.
Multivariate Walsh functions are defined as the product of the one-dimensional Walsh functions
\begin{align*}
  \wal_{b,\bsh}(\bsx)
  &:=
  \prod_{j=1}^s \wal_{b,h_j}(x_j)
  .
\end{align*}
The Walsh functions span $L_2([0,1)^s)$ \cite{Wal23,Fin49}.
Now consider $f$ expanded in its Walsh series in base~$b$
\begin{align*}
  f(\bsx)
  &=
  \sum_{\bsh \in \N_0^s} \hat{f}_{\bsh} \, \wal_{b,\bsh}(\bsx)
  ,
  &
  \text{where }
  \hat{f}_{\bsh}
  =
  \hat{f}_{b,\bsh}
  &=
  \int_{[0,1)^s} f(\bsx) \, \overline{\wal_{b,\bsh}(\bsx)} \rd\bsx
  .
\end{align*}
The sum\RefEq{eq:sum-basis-funs} can then be written, making use of the generating matrices $C_j \in \F_b^{n \times m}$ and setting $\vec{w} := \sum_{j=1}^s  C_j^\top [\vec{h}_j]_n \in \F_b^m$.
Then
\begin{align}\notag
  \frac1{b^m} \sum_{k=0}^{b^m-1} \prod_{j=1}^s \E^{\twopii [\vec{h}_j]_n^\top C_j \vec{k} / b}
  &=
  \frac1{b^m} \sum_{k=0}^{b^m-1} \E^{\twopii \vec{k}^\top (\sum_{j=1}^s  C_j^\top [\vec{h}_j]_n) / b} 
  \\\notag
  &=
  \prod_{i=0}^{m-1} \frac1b \sum_{k_i \in \F_b} \E^{\twopii k_i w_{i+1} / b}
  =
  \prod_{i=0}^{m-1}
  \begin{cases}
    1 & \text{if } w_{i+1} = 0 \in \F_b , \\
    0 & \text{otherwise} ,
  \end{cases}
  \\\label{eq:poly-character-prop}
  &=
  \begin{cases}
    1 & \text{if } \vec{w} = \sum_{j=1}^s  C_j^\top [\vec{h}_j]_n = \vec{0} \in \F_b^m, \\
    0 & \text{otherwise} .
  \end{cases}
\end{align}
This is the character property for polynomial lattice rules.
Similar to\RefEq{eq:sum-over-ZN}, for $0 \le k < b^m$, $P(\X)$ irreducible and $y_k(z,P) \in [0,1)$ the $k$th polynomial lattice point for a generator polynomial $z(\X)$ modulo $P(\X)$, 
\begin{align}\notag
  \frac1{b^n} \sum_{z \in G_{b,n}} \wal_{b,h}(y_k(z,P))
  &=
  \frac1{b^n} \sum_{z \in G_{b,n}} \E^{\twopii [\vec{h}]_n^\top C(z, P) \, \vec{k} \, / b}
  \\\notag
  &=
  \begin{cases}
    \prod_{i=1}^n \frac1b \sum_{y_i \in \F_b} \E^{\twopii h_{i-1} y_i / b} & \text{if } k\ne0, \\
    1 & \text{if } k = 0 ,
  \end{cases}
  \\\label{eq:sum-over-Gbn}
  &=
  \begin{cases}
    1 & \text{if }  k = 0 \text{ or } h \equiv 0 \pmod{b^n} , \\
    0 & \text{otherwise}.
  \end{cases}
\end{align}
The equivalent observation is that all one-dimensional points $(0.y_1 \ldots y_n)_b$ are generated by looping over all possible generator polynomials $z(\X)$ and keeping $k$ fixed, but different from $0$, when $P(\X)$ is an irreducible polynomial over $\F_b$.
Again, this will become of use in \RefThm{thm:cbc} on the component-by-component construction, see also \cite{KP2007} for non-irreducible $P(\X)$.

Condition\RefEq{eq:poly-character-prop} defines the dual lattice of the polynomial lattice rule:
\begin{align*}
  L^\perp
  &=
  \left\{ \bsh \in \N_0^s : \sum_{j=1}^s  C_j^\top [\vec{h}_j]_n = \vec{0} \in \F_b^m \right\}
  .
\end{align*}
With some more work it can be formulated into a polynomial version as is shown next.
\begin{lemma}
  The dual of a polynomial lattice rule with $b^m$ points and generating vector $\bsz \in G_{b,n}^s$ modulo $P(\X) \in \F_b[\X]$, with $\deg(P) = n \ge m$, is given by
\begin{align*}
  L^\perp
  &=
  \left\{ \bsh \in \N_0^s : \sum_{j=1}^s z_j(\X) [h_j(\X)]_n \equiv a(\X) \; (\operatorname{mod}{P(\X)}) \text{ for which } \deg(a) < n - m \right\}
  .
\end{align*}
\end{lemma}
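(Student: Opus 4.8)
The plan is to start from the matrix form of the character property that already defines the dual lattice,
$$\sum_{j=1}^s C_j^\top [\vec{h}_j]_n = \vec{0} \in \F_b^m,$$
and to reinterpret each component of $C_j^\top [\vec{h}_j]_n$ as a coefficient of a formal Laurent series. Recall that $C_j$ is the Hankel-type matrix with $c_{j,r,t} = a_{j,r+t-1}$, where $a_j(\X) = z_j(\X)/P(\X) = \sum_{i \ge 1} a_{j,i} \, \X^{-i}$. First I would compute the $t$-th entry (for $t = 1,\ldots,m$) of $C_j^\top [\vec{h}_j]_n$, namely $\sum_{r=1}^n a_{j,r+t-1} \, h_{j,r-1}$, and recognise this convolution, after the index shift $i = r-1$, as exactly the coefficient of $\X^{-t}$ in the product $a_j(\X) \, [h_j(\X)]_n$. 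This identification is the crux of the argument: it converts the linear-algebraic condition into a statement about the negative-power part of a Laurent series.

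Next I would sum over $j$ and substitute $a_j = z_j/P$ to write the whole left-hand side as the vector of coefficients of $\X^{-1}, \ldots, \X^{-m}$ in
$$\frac{1}{P(\X)} \sum_{j=1}^s z_j(\X) \, [h_j(\X)]_n.$$
Thus $\bsh \in L^\perp$ if and only if these $m$ leading negative-power coefficients all vanish.

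Then I would perform polynomial division: writing $v(\X) := \sum_{j=1}^s z_j(\X)\,[h_j(\X)]_n = Q(\X)\,P(\X) + a(\X)$ with canonical remainder $\deg(a) < \deg(P) = n$, one gets $v/P = Q + a/P$. Since $Q$ contributes only nonnegative powers of $\X$ and, because $\deg(a) < n$, the strictly proper quotient $a/P$ contributes only negative powers, the coefficients of $\X^{-1},\ldots,\X^{-m}$ in $v/P$ coincide with those of $a/P$. Finally I would read off the degree condition: the most significant term of $a/P$ has order $\X^{-(n-\deg a)}$, so these first $m$ negative-power coefficients vanish precisely when $n - \deg(a) > m$, that is $\deg(a) < n - m$ (the case $a = 0$ being covered by the convention $\deg(0) = -\infty$). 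This is exactly the claimed description of $L^\perp$.

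The main obstacle I anticipate is the careful index bookkeeping in the first step: aligning the vector indices $1,\ldots,n$ with the polynomial exponents $0,\ldots,n-1$, respecting the truncation conventions for $[\vec{h}_j]_n$ and $[h_j(\X)]_n$, and verifying that the $m$ rows of the matrix identity constrain exactly the coefficients $\X^{-1},\ldots,\X^{-m}$ and no higher ones. One must also confirm that the polynomial part $Q(\X)$ genuinely does not interfere, which rests on $\deg(a) < n$ so that $a/P$ is strictly proper; the remaining manipulations are routine.
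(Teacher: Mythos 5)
Your proposal is correct and follows essentially the same route as the paper's own proof: identify the entries of $C_j^\top [\vec{h}_j]_n$ with the coefficients of $\X^{-1},\ldots,\X^{-m}$ in $a_j(\X)\,[h_j(\X)]_n$ where $a_j = z_j/P$, sum over $j$, and convert the vanishing of those $m$ coefficients into the remainder condition $\deg(a) < n-m$ via polynomial division. If anything, your write-up makes the index bookkeeping and the role of the polynomial part $Q(\X)$ more explicit than the paper does, which is a welcome addition rather than a deviation.
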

\begin{proof}
  To find $\sum_{j=1}^s  C_j^\top [\vec{h}_j]_n = \vec{0}$ define
  $\vec{w}_j = C_j^\top [\vec{h}_j]_n$ such that $\sum_{j=1}^s \vec{w}_j = \vec{0}$ is needed to complete the proof.
  The product $\vec{w}_j = C_j^\top [\vec{h}_j]_n$ corresponds to the matrix-vector product
  \begin{align*}
    \begin{pmatrix}
      a_{j,1} & a_{j,2} & \cdots & a_{j,n} \\
      a_{j,2} & a_{j,3} & \cdots & a_{j,n+1} \\
      \vdots & \vdots & & \vdots \\
      a_{j,m} & a_{j,m+1} & \cdots & a_{j,m+n-1}
    \end{pmatrix}
    \begin{pmatrix}
      h_{j,0} \\ h_{j,1} \\ h_{j,2} \\ \vdots \\ h_{j,n-1}
    \end{pmatrix}
    &=
    \begin{pmatrix}
      w_{j,1} \\ w_{j,2} \\ \vdots \\ w_{j,m}
    \end{pmatrix}
  \end{align*}
  over $\F_b$.
  This matrix-vector product can be interpreted as a finite precision version of $w_j(\X) := a_j(\X) \, [h_j(\X)]_n \bmod{1(\X)}$ where $a_j(\X) = z_j(\X) / P(\X) \in \F_b((\X^{-1}))$ and $w_j(\X) \in \F_b((\X^{-1}))$; and $\vec{w}_j \in \Z_b^m$ is obtained from the truncation $[w_j(\X)]_m$.
  Now consider the infinite precision polynomial sum
  \begin{multline*}
    \sum_{j=1}^s w_j(\X)
    \DG{ \\ }{}
    =
    \sum_{j=1}^s
    \frac{z_j(\X)}{P(\X)} \, [h_j(\X)]_n \bmod{1(\X)}
    =
    0 \, \X^{-1} + \cdots + 0 \, \X^{-m} + w_{m+1} \, \X^{-(m+1)} + \cdots
    \\
    \Leftrightarrow \qquad
    \sum_{j=1}^s
    z_j(\X) \, [h_j(\X)]_n
    \equiv
    a(\X)
    \pmod{P(\X)}
    ,
  \end{multline*}
  for any $a(\X) \in \F_b[\X]$ with $\deg(a) < n-m$.
\end{proof}

Thus, also here, in terms of Walsh coefficients in the same base, the error can be expressed as the sum of the Walsh coefficients in the dual:
\begin{align*}
  E_N(f; \bsz)
  &=
  \sum_{\bszero \ne \bsh \in L^\perp} \hat{f}_{\bsh}
  .
\end{align*}
Hence also the worst-case error takes exactly the forms\RefEqTwo{eq:wce-lattice-rule}{eq:wce-lattice-rule-infty} as for a lattice rule.
That is, for $1 < p \le \infty$,
\begin{align*}
  e(\bsz, N; |||\cdot|||_{p,\alpha,\bsgamma})
  &=
  \left( \sum_{\bszero \ne \bsh \in L^\perp} r_{\alpha,\bsgamma}(\bsh)^{-q} \right)^{1/q}
  ,
\end{align*}
with $q=p/(p-1)$, and for $p=1$ and $q=\infty$,
\begin{align*}
  e(\bsz, N; |||\cdot|||_{1,\alpha,\bsgamma})
  &=
  \sup_{\bszero \ne \bsh \in L^\perp} r_{\alpha,\bsgamma}(\bsh)^{-1}
  .
\end{align*}
Because of this similarity and the shorthand notation just referring to $\bsz$ and $N$, large parts of \RefSec{sec:lattice-rules} can be transplanted to a polynomial lattice rule equivalent.
E.g., assuming algebraically decaying Walsh modes,
\begin{align*}
  r_{\alpha,\bsgamma}(\bsh)
  &=
  \prod_{j=1}^s b^{\alpha \floor{\log_b h_j}}
  ,
\end{align*}
where $\log_b$ is the logarithm in base~$b$ (with the convention
$\log_b 0 = -\infty$), leads to a so-called \emph{Walsh space}, see, e.g., \cite{DKPS2005,DP2005}.
Similar to\RefEq{eq:latrule-alpha} it can be shown that $\alpha > 1/q$ for this setting.

Walsh series in base~$2$ are equivalent to standard Haar series.
However, for continuous, one-dimensional $f$ it is known that if its Haar coefficients decay faster than $h^{-3/2}$ with respect to the orthonormal Haar basis then $f$ is constant on $[0,1]$, and a similar remark is made with respect to Walsh series \cite{Fin49}.
This means the decay can only be moderate with such a choice of $r_{\alpha,\bsgamma}$.
(See also \cite{Nuy2007} for a reproducing kernel Hilbert space based on Haar wavelets and the equivalence with the Walsh space.)
The power of the Walsh series however lies in more complicated functions $r_{\alpha,\bsgamma}$ such that certain Sobolev spaces are embedded in it, see, e.g., \cite{DP2005,DP2007,Dic2008,DP2010}.
This will be made more explicit in \RefSec{sec:poly-spaces}.

Like the Zaremba index\RefEq{eq:Zaremba-index} for lattice rules, a similar figure of merit can be defined for polynomial lattice rules:
\begin{align*}
  \rho(\bsz(\X), P(\X))
  &:=
  (s-1) + \! \min_{\bszero \ne \bsh \in L^\perp} \sum_{j=1}^s \deg(h_j(\X))
  =
  (s-1) + \! \min_{\bszero \ne \bsh \in L^\perp}  \log_b r_{1,\bsgamma}(\bsh)
  ,
\end{align*}
where for the last equality $\gamma_\setu \equiv 1$,
see, e.g., \cite{Nie92,DP2010,Pil2012}.
(In the same references it is shown that a polynomial lattice rule is a strict $(t,m,s)$-net in base~$b$ with $t = m - \rho(\bsz(\X), P(\X))$. See \cite{Nie92,DP2010,Pil2012} for $(t,m,s)$-nets, $t$-values and their relationship to Walsh functions.)

Exactly the same remark about the special case for $p=1$ holds here as well as it is independent of the specifics of the function space and so only $1 < p \le \infty$ will be considered in what follows.
In this case also for polynomial lattice rules there is a function $\chi$ for which $E_N(\chi;\bsz)^{1/q} = e(\bsz, N; |||\cdot|||_{p,\alpha,\bsgamma})$ and it is given by
\begin{align}\label{eq:poly-chi}
  \chi(\bsx; |||\cdot|||_{p,\alpha,\bsgamma})
  &=
  \sum_{\bszero \ne \bsh \in \N_0^s}
  r_{\alpha,\bsgamma}(\bsh)^{-q}
  \, \wal_{b,\bsh}(\bsx)
  .
\end{align}

\section{Weighted worst-case errors}\label{sec:weights}

In \RefThm{thm:KH}, the decay function $r_{\alpha,\bsgamma}(\bsh)$ controls the convergence of the series expansion through the parameter $\alpha$ but also includes a set of $2^s$ non-negative weights $\{\bsgamma_\setu\}_{\setu \subseteq \{1:s\}}$.
These weights have been introduced to control the dependence on the number of dimensions and to cure the curse of dimensionality.
See, e.g., \cite{SW98,Hic98,Hic98a,KS2005} and the recent monographs on tractability \cite{NW2008,NW2010,NoWo2012}.
The \emph{curse of dimensionality}, in short, means, that the worst-case error has an exponential dependency on the dimension~$s$.
The trick is now to replace the exponential dependency by a constant which can be controlled by the weights $\gamma_\setu$ and which for particular choices of weights can be bounded by an absolute constant, independent of~$s$.
For this overview it is important to take a look at the different kinds of weights which have appeared in the literature.

A natural place to introduce weights is just before applying H\"older's inequality in\RefEq{eq:holder}.
Write
\begin{align}\label{eq:r-weighted}
  r_{\alpha,\bsgamma}(\bsh)
  &=
  \gamma_{\setu(\bsh)}^{-1/2} \, r_\alpha(\bsh)
  ,
\end{align}
where the influence of $\gamma_{\setu}$ and $\alpha$ has now been separated and the ``support of $\bsh$'' is defined as
\begin{align*}
  \setu(\bsh)
  &:=
  \{ 1 \le j \le s : h_j \ne 0 \}
  .
\end{align*}
Then
\begin{align*}
  |||f|||_{p,\alpha,\bsgamma}^p
  &=
  \sum_{\bszero \ne \bsh \in \Lambda} |\hat{f}_{\bsh}|^p \, \gamma_{\setu(\bsh)}^{-p/2} \, r_{\alpha}(\bsh)^p
  ,
  \\\intertext{and}
  e(\bsz, N; |||\cdot|||_{p,\alpha,\bsgamma})^q
  &=
  \sum_{\bszero \ne \bsh \in L^\perp} \gamma_{\setu(\bsh)}^{q/2} \, r_{\alpha}(\bsh)^{-q}
  ,
\end{align*}
with the natural modification if $p=\infty$.
It would be more natural to write just $\gamma_{\setu(\bsh)}$ instead of $\sqrt{\gamma_{\setu(\bsh)}}$, however, most publications on tractability only consider the Hilbert case $p=2$ and introduce the weights directly into the inner product (or the norm); a notable exception is \cite{Was2013}.
Hence to have the results of those papers to exactly hold for the case $p=2$, the square root in\RefEq{eq:r-weighted} is retained.
A similar situation occurs with $\alpha$ versus $2\alpha$, see also \cite[Appendix~A]{NW2008}, or the introduction of ``the square of $r_{\alpha,\bsgamma}$'' in the reproducing kernel when working with reproducing kernel Hilbert spaces.

First some more notation is needed.
For $\setu \subseteq \{1:s\}$ define $$\Z_{\setu} := \{ \bsh \in \Z^s : h_j \ne 0 \text{ for } j \in \setu \text{ and } h_j = 0 \text{ for } j \not\in \setu\},$$ i.e., the support of a vector~$\bsh \in \Z_{\setu}$ is~$\setu$.
Such a vector will be denoted by $\bsh_\setu \in \Z_{\setu}$ to stress this property.
A vector $\bsh_{\setu} \in L^\perp$, for which $h_j = 0$ for $j \not\in \setu$, ignores the dimensions not in $\setu$.
E.g., for a lattice rule it follows that $\bsh_{\setu} \cdot \bsz \equiv \sum_{j\in\setu(\bsh)} h_j z_j \equiv 0 \pmod{N}$ and so there is no dependency on the dimensions of the dual not in $\setu(\bsh)$.
Therefore, also define $L^{\perp}_{\setu} := \{ \bsh \in L^{\perp} \cap \Z_{\setu} \}$.

Note that for both lattice rules and polynomial lattice rules 
\begin{align*}
  L^\perp &= \bigcup_{\setu \subseteq \{1,\ldots,s\}} L_{\setu}^\perp
  &\text{and}&&
  \Lambda &= \bigcup_{\setu \subseteq \{1,\ldots,s\}} \Lambda_{\setu}
  ,
\end{align*}
since $\Z^s = \bigcup_{\setu \subseteq \{1:s\}} \Z_{\setu}$ and $\N_0^s = \bigcup_{\setu \subseteq \{1:s\}} \N_{\setu}$,
where $\N_{\setu} := \{ \bsh \in \N_0^s : h_j \ne 0 \text{ for } j \in \setu \text{ and } h_j = 0 \text{ for } j \not\in \setu\}$.

With this notation it follows that
\begin{align*}
  e(\bsz, N; |||\cdot|||_{p,\alpha,\bsgamma})^q
  &=
  \sum_{\bszero \ne \bsh \in L^\perp} \gamma_{\setu(\bsh)}^{q/2} \, r_{\alpha}(\bsh)^{-q}
  =
  \sum_{\emptyset \ne \setu \subseteq \{1,\ldots,s\}}
  \gamma_{\setu}^{q/2}
  \sum_{\bsh_\setu \in L^{\perp}_{\setu}} r_\alpha(\bsh_\setu)^{-q}
  ,
\end{align*}
where it is assumed that $r_{\alpha}(\bsh)$ is ``zero-neutral'' in the sense that $h_j$ which are equal to zero can be ignored.

For the algebraically decaying series expansions $r_{\alpha,\bsgamma}(\bsh) = \gamma_{\setu(\bsh)}^{-1} \prod_{j \in \setu(\bsh)} |h_j|^{\alpha}$ in the case of lattice rules and for $r_{\alpha,\bsgamma}(\bsh) = \gamma_{\setu(\bsh)}^{-1} \prod_{j \in \setu(\bsh)} b^{\alpha \floor{\log_b h_j}}$ in the case of polynomial lattice rules, then, for $1 < p \le \infty$ and $\alpha > 1$, the following equality holds
\begin{align}\label{eq:Palpha-equivalence}
  e(\bsz, N; |||\cdot|||_{\infty,\alpha,\bsgamma})
  &=
  e(\bsz, N; |||\cdot|||_{p,\alpha/q,\bsgamma^{1/q}})^{p}
  ,
\end{align}
where $\bsgamma^{1/q}$ means each weight raised to the power $1/q$ and $q$ is the H\"older conjugate of~$p$.

A short overview of different types of weights is given next.
They will resurface in \RefSec{sec:fast-cbc} when the worst-case error needs to be calculated to find a good generating vector.
There are three more or less overall structural weight types:
\begin{itemize}\setlength{\itemsep}{0mm}
  \item \emph{General weights}: the term general weights is used when there is no specific structure in the weights, i.e., the $2^s$ weights are taken arbitrary, see \cite{DSWW2006}.
  \item \emph{Product weights}: with a product basis in mind, one can give a different weight $\gamma_j = \gamma_{\{j\}}$ to each dimension; the weights then take the form $\gamma_{\setu} = \prod_{j \in \setu} \gamma_j$, see \cite{SW98,Kuo2003}.
  \item \emph{Order-dependent weights}: the importance of a subset of dimensions is given by the size of the subset, that is: $\gamma_{\setu} = \Gamma_{|\setu|}$ for a set of weights $\Gamma_1$, $\ldots$, $\Gamma_s$, see \cite{DSWW2006}.
\end{itemize}
The product and order-dependent structures can also be combined:
\begin{itemize}\setlength{\itemsep}{0mm}
  \item \emph{Product-and-order-dependent weights} (POD weights): these weights are a direct combination of product weights and order-dependent weights; they take the form $\gamma_{\setu} = \Gamma_{|\setu|} \prod_{j \in \setu} \beta_j$, see~\cite{KSS2012}.
  \item \emph{Smoothness-driven product-and-order-dependent weights} (SPOD weights): they take into account norms of partial derivatives up to order $\alpha$. In \cite{DKGNS2013} such weights are derived which model $L_{\infty}$-bounds on the derivatives; they take the form 
  $\gamma_\setu = \sum_{\boldsymbol{\nu}_{\setu}}
 |\boldsymbol{\nu}_{\setu}|! \, \prod_{j\in\setu} \left(2^{\delta(\nu_j,\alpha)}\beta_j^{\nu_j}  \right)$
 where the sum is over $\boldsymbol{\nu}_{\setu} \in \{1:\alpha\}^{|\setu|}$ and $\delta(\nu_j,\alpha) = 1$ if $\nu_j = \alpha$ and zero otherwise.
\end{itemize}
There are three additional constraints which impose certain weights to be zero:
\begin{itemize}\setlength{\itemsep}{0mm}
  \item \emph{Finite-order weights}: weights are of order $q^{*}$ when $\gamma_{\setu} = 0$ for $|\setu| > q^*$, see \cite{DSWW2006}; this constraint can be combined with other structures like product and order-dependent weights, see, e.g., \cite{Gne2012}.
  \item \emph{Finite-intersection weights}: finite-intersection weights of degree $\rho$ restrict the number of overlapping sets such that for any $\setu$, for which $\gamma_{\setu} > 0$, at most $\rho$ other sets $\setv$, with weights $\gamma_{\setv} > 0$, might have a non-empty $\setu \cap \setv$. Again this constraint can be combined with other weight structures, see \cite{Gne2012}.
  \item \emph{Finite-diameter weights}: this is a subset of finite-intersection weights, and thus also of finite-order weights. Here $\gamma_{\setu} = 0$ when $\operatorname{diam}(\setu) := \max_{i,j \in \setu} |i - j| > q$, see \cite{NW2008,Gne2012}. Again this constraint can be combined with other structures.
\end{itemize}
A typical combination would be \emph{finite-order-dependent weights} which combine the order-dependent structure with the finite-order property, see \cite{DSWW2006}.

\section{Some standard spaces}\label{sec:spaces}

\RefThm{thm:KH} implies a function space for which the worst-case error bound holds by specifying $p$, a decay function $r_{\alpha,\bsgamma}$ and a basis $\{\varphi_{\bsh}\}_{\bsh}$.
The functions in this space are those for which $|||f|||_{p,\alpha,\bsgamma} < \infty$ such that the Koksma--Hlawka error bound holds.

\subsection{Lattice rules and Fourier spaces}

The example of the \emph{Korobov space} was already given above.
For the Korobov space the functions are expanded with respect to the standard Fourier basis\RefEq{eq:fourier-series}.
This results automatically in a function space of periodic functions as the series must be absolutely convergent.
The function $r_{\alpha,\bsgamma}$ here takes the form
\begin{align*}
  r_{\alpha,\bsgamma}(\bsh)
  &=
  \gamma_{\setu(\bsh)}^{-1/2}
  \prod_{j=1}^s \max(1, |h_j|)^{\alpha}
  =
  \gamma_{\setu(\bsh)}^{-1/2}
  \prod_{j \in \setu(\bsh)} |h_j|^{\alpha}
  ,
\end{align*}
with $\alpha > 1/q$.
The function $\chi$ from\RefEq{eq:chi} is given by
\begin{align}\label{eq:Korobov-omega}
  \chi(\bsx; |||\cdot|||_{p,\alpha,\bsgamma})
  &=
  \sum_{\emptyset \ne \setu \subseteq \{1:s\}}
  \gamma_{\setu}^{q/2}
  \prod_{j \in \setu} 
  \omega(x_j)
  ,
  \qquad \text{where }
  \omega(x)
  =
  \sum_{0 \ne h \in \Z} \frac{\E^{\twopii h x}}{|h|^{\alpha q}}
  ,
\end{align}
which for product weights $\gamma_{\setu(\bsh)} = \prod_{j\in \setu(\bsh)} \gamma_j$ becomes
\begin{align*}
  \chi(\bsx; |||\cdot|||_{p,\alpha,\bsgamma})
  &=
  -1 + \prod_{j=1}^s \left( 1 + \gamma_j^{q/2} \omega(x_j) \right)
  .
\end{align*}
The error, to the power $1/q$, of integrating $\chi$ then gives the worst-case error.
When $\alpha q$ is even the infinite sum for the function $\omega(x)$ above can be expressed in terms of a Bernoulli polynomial.
As there are only $N$ different values needed of this function, that is, for each one-dimensional lattice point $k/N$, $k=0,\ldots,N-1$, it can be calculated up front (and then any value of $\alpha q$ can be used).

A very similar space is the one resulting in the $R_\alpha$ criterion for $p=\infty$, see \cite{Kor67,Nie92}.
Again, functions are expressed with respect to the standard Fourier basis\RefEq{eq:fourier-series} as above, but now using a finite dimensional basis, which changes with $N$:
\begin{align}\label{eq:finite-dim}
  f(\bsx)
  &=
  \sum_{\bsh \in \left[-\tfrac N2,\tfrac N2\right)^s} \hat{f}_{\bsh} \, \E^{\twopii \bsh \cdot \bsx}
  .
\end{align}
Again the same form of $r_{\alpha,\bsgamma}$ as for the Korobov space is used, but here any $\alpha$ is fine.
The function $\chi$ looks very much like the one for the Korobov space.
E.g., for product weights it is given by
\begin{align*}
  \chi(\bsx; |||\cdot|||_{p,\alpha,\bsgamma})
  &=
  -1 + \prod_{j=1}^s \left( 1 + \gamma_j^{q/2} \omega(x_j) \right)
  ,
  \text{ where } 
  \omega(x)
  =
  \sum_{0 \ne h \in \left[-\tfrac N2,\tfrac N2\right)} \frac{\E^{\twopii h x}}{|h|^{\alpha q}}
  .
\end{align*}
The $N$ needed values $\omega(x)$ can be easily obtained by an FFT using precalculation as this sum takes exactly the form of an FFT. 
For functions like\RefEq{eq:finite-dim} it is easy to show that the worst-case error vanishes when using a regular grid with $N^s$ nodes as then there are no dual points, however, for a rank-1 lattice rule with $N$ points the number of duals in $[-N/2, N/2)^s$ is at least $N^{s-1}$.
A trivial lower bound, for $\gamma_\setu \equiv 1$, also shows that $e(\bsz, N; |||\cdot|||_{p,\alpha,\bsgamma}) \ge 2^\alpha N^{-\alpha}$ for $p \ge 1$, which is the same as for the Korobov space.
In \cite[Theorem~5.5]{Nie92}, for $p=\infty$, the value of $(R_1)^\alpha$, as well as $R_\alpha$, is used to bound $P_\alpha$ for any $\alpha > 1$.

\subsection{Randomly-shifted lattice rules and the unanchored Sobolev space}\label{sec:randomly-shifted}

By adding a (random) shift $\bsDelta \in [0,1)^s$, where $\bsDelta = (\Delta_1, \ldots, \Delta_s)$, to all of the points of a lattice rule, i.e., for $k = 0,\ldots,N-1$,
\begin{align}\label{eq:shifted}
  \bsx_k
  &=
  \left(\frac{\bsz k}{N} + \bsDelta\right) \bmod{1}
  =
  ( (z_1 k / N + \Delta_1) \bmod{1}, \ldots, (z_s k / N + \Delta_s) \bmod{1} )
  ,
\end{align}
the rule is called a \emph{(randomly-)shifted lattice rule}.
For functions which can be expressed in terms of a Fourier series such a shift changes the error for a fixed function, as the sum
\begin{multline*}
  \frac1N \sum_{k=0}^{N-1} \varphi_{\bsh}(\bsx_k)
  =
  \frac1N \sum_{k=0}^{N-1} \E^{\twopii (\bsz k / N + \bsDelta) \cdot \bsh}
  \DG{ \\ }{}
  =
  \E^{\twopii \bsDelta \cdot \bsh} \frac1N \sum_{k=0}^{N-1} \E^{\twopii \bsz \cdot \bsh k / N}
  =
  \E^{\twopii \bsDelta \cdot \bsh} \, \ind{\bsh \in L^\perp}
  ,
\end{multline*}
where $\ind{}$ is the indicator function,
and so\RefEq{eq:error} becomes
\begin{align*}
  E_N(f; \bsz, \bsDelta)
  &=
  \sum_{\bszero \ne \bsh \in L^\perp} \E^{\twopii \bsDelta \cdot \bsh} \, \hat{f}_{\bsh}
  .
\end{align*}
The worst-case error however stays unchanged for the Fourier space as, for $1 < p \le \infty$,
\begin{align*}
  e(\bsz, N; |||\cdot|||_{p,\alpha,\bsgamma})
  &=
  \left(\sum_{\bszero \ne \bsh \in L^\perp}  r_{\alpha,\bsgamma}(\bsh)^{-q} \left| \E^{\twopii \bsDelta \cdot \bsh} \right|^q\right)^{1/q}
  \DG{ \\ }{}
  &  
  \DG{ }{ \hspace{-5mm} }
  =
  \left(\sum_{\bszero \ne \bsh \in L^\perp}  r_{\alpha,\bsgamma}(\bsh)^{-q} \right)^{1/q}
  ,
\end{align*}
and similarly for $p=1$ and $q=\infty$.
By observing that the expected value of a randomly-shifted lattice rule vanishes, $\mathbb{E}_{\bsDelta}[\E^{\twopii \bsDelta \cdot \bsh}] = 0$, where the shift is uniformly distributed over the unit cube, i.e., $\bsDelta \sim U[0,1)^s$, it is possible to obtain a statistical error estimator by drawing $\nu$ i.i.d.\ random shifts and averaging the obtained approximations,
\begin{align*}
  \overline{Q}_{N,\nu}(f; \bsz)
  =
  Q_{N,\nu}(f; \bsz, \{\bsDelta_i\}_{i=1}^\nu)
  &=
  \frac1\nu \sum_{i=1}^\nu Q_N(f; \bsz, \bsDelta_i)
  ,
  \\\text{where }
  Q_N(f; \bsz, \bsDelta_i)
  &=
  \frac1N \sum_{k=0}^{N-1} f((\bsz k / N + \bsDelta_i) \bmod{1})
  ,
\end{align*}
the \emph{standard error} of these independent approximations is then given by
\begin{align*}
  \sqrt{\frac1{\nu(\nu-1)} \sum_{i=1}^\nu \left( Q_N(f; \bsz, \bsDelta_i) - \overline{Q}_{N,\nu}(f; \bsz) \right)^2}
  ,
\end{align*}
and can be used in, e.g., a Chebyshev confidence interval, see, e.g., \cite[p.~91]{SJ94}.

Randomly-shifted lattice rules have a purpose for non-periodic functions as well.
For $s=1$, $p=2$ and integer $r \ge 1$ consider the norm of the \emph{unanchored Sobolev space} of smoothness $r$
\begin{align*}
  \|f\|_{2,r,\gamma}^{2}
  &=
  \left|\int_0^1 f(x) \rd{x}\right|^2 
  + \gamma^{-1} \sum_{\tau=1}^{r-1} \left|\int_0^1 f^{(\tau)}(x) \rd{x}\right|^2 
  + \gamma^{-1} \int_0^1 \left| f^{(r)}(x) \right|^2 \rd{x}
  ,
\end{align*}
and its tensor generalization for $s \ge 2$.
Through the theory of reproducing kernel Hilbert spaces it can be shown that the shift-averaged kernel of this space is a sum of Bernoulli polynomials of even degrees, starting from degree~2 up to degree $2r$.
More specifically, for $r=1$ the reproducing kernel coincides with that of a Korobov space with $\alpha=1$ and the weights scaled by $1/(2 \pi^2)$, i.e., here, for $p=2$,
\begin{align*}
  \omega(x)
  &= 
  2\pi^2 \sum_{0 \ne h \in \Z} \frac{\E^{\twopii h x}}{|h|^{2}}
  =
  B_2(x)
  =
  x^2 - x + \tfrac16
  ,
\end{align*}
where $B_2(x)$ is the Bernoulli polynomial of degree~2, compare with\RefEq{eq:Korobov-omega}.
This means a randomly-shifted lattice rule is expected to achieve the optimal rate for $r=\alpha=1$ being $O(N^{-1+\epsilon})$, $\epsilon > 0$, see, e.g., \cite{NW2008}.
Furthermore all tractability results can be transferred from one space to the other.
For higher order unanchored (non-periodic) Sobolev spaces the random shifting does not help and so a randomly-shifted lattice rule is stuck with the rate for $r=1$.

\subsection{Tent-transformed lattice rules and the cosine space}\label{sec:tent}

As discussed in \RefSec{sec:lattice-rules} the choice of the standard Fourier basis reduces the sum\RefEq{eq:sum-basis-funs} to the dual lattice condition.
The effect of this choice of basis is that functions with an absolutely convergent Fourier series expansion are by definition periodic functions.
It is possible to pick a different basis and express the functions in a cosine expansion
\begin{multline}\label{eq:cos-series}
  f(\bsx)
  =
  \sum_{\bsh \in \N_0^s} \hat{f}_{\bsh} \, \prod_{j \in \setu(\bsh)} \kappa \cos(\pi h_j x_j)
  ,
  \DG{ \\ }{\quad}
  \text{ where } \quad
  \hat{f}_{\bsh}
  =
  \int_{[0,1)^s} f(\bsx) \, \prod_{j \in \setu(\bsh)} \kappa \cos(\pi h_j x_j) \rd\bsx
  ,
\end{multline}
with $\kappa \ne 0$ an arbitrary constant, e.g., $\kappa = \sqrt2$.
Functions in this space can be non-periodic, in fact, this cosine basis spans $L_2([0,1)^s)$.
Such a space was studied in \cite{DNP2013} in the Hilbert setting.
To regain the nice property of the dual lattice it is easy to show that the component wise application of the \emph{tent transform}
\begin{align*}
  \phi(x)
  &:=
  1 - |2x - 1|
\end{align*}
to the lattice rule point set, obtaining a ``tent-transformed lattice rule'', reduces the sum\RefEq{eq:sum-basis-funs} to the dual lattice condition as well.
This is shown in the next theorem which is a slight generalization of the result in \cite{DNP2013}.

\begin{theorem}[Tent-transformed lattice rule error bound]
Suppose $f$ can be expanded in an absolutely convergent cosine series\RefEq{eq:cos-series}, then using a tent-transformed lattice rule, the error of approximating the integral is given by
\begin{align*}
  E_N(f; \bsz, \phi)
  &=
  \frac1N \sum_{k=0}^{N-1} f(\phi(z_1 k / N \bmod1), \ldots, \phi(z_s k / N \bmod1))
  -
  \int_{[0,1)^s} f(\bsx) \rd\bsx
  \\
  &=
  \sum_{\substack{\bszero \ne \bsh \in \Z^s \\ \bsh \cdot \bsz \equiv 0 \tpmod{N}}} 
  \hat{f}_{|\bsh|} \, (\kappa/2)^{|\setu(\bsh)|}
  .
\end{align*}
Furthermore define, for $1 \le p < \infty$,
  \begin{align*}
    ||| f |||_{p,\alpha,\bsgamma}^p
    &=
    \sum_{\bszero \ne \bsh \in \N_0^s} |\hat{f}_{\bsh}|^p \, r_{\alpha,\bsgamma}(\bsh)^p
    ,
  \end{align*}
  and, for $p=\infty$,
  \begin{align*}
    ||| f |||_{\infty,\alpha,\bsgamma}
    &=
    \sup_{\bszero \ne \bsh \in \N_0^s} |\hat{f}_{\bsh}| \, r_{\alpha,\bsgamma}(\bsh)
    .
  \end{align*}
Then for $\frac1p + \frac1q = 1$,
\begin{align*}
  E_N(f; \bsz, \phi) 
  &\le
  |||f|||_{p,\alpha,\bsgamma}
  \,
  e(\bsz, N, \phi; |||\cdot|||_{p,\alpha,\bsgamma})
\end{align*}
where, for $1 < p \le \infty$,
\begin{align*}
  e(\bsz, N, \phi; |||\cdot|||_{p,\alpha,\bsgamma})
  &=
  \left( 
  \sum_{\substack{\bszero \ne \bsh \in \Z^s \\ \bsh \cdot \bsz \equiv 0 \tpmod{N}}} 
  r_{\alpha,\bsgamma}(\bsh)^{-q}
  \,
  (\kappa^q/2)^{|\setu(\bsh)|}
  \right)^{1/q}
  ,
\end{align*}
and for $p=1$ and $q=\infty$
\begin{align*}
  e(\bsz, N, \phi; |||\cdot|||_{1,\alpha,\bsgamma})
  &=
  \sup_{\substack{\bszero \ne \bsh \in \Z^s \\ \bsh \cdot \bsz \equiv 0 \tpmod{N}}} 
  r_{\alpha,\bsgamma}(\bsh)^{-1}
  \,
  \kappa^{|\setu(\bsh)|}
  .
\end{align*}
In fact, the worst-case error for tent-transformed lattice rules in the cos-space equals the worst-case errors for lattice rules in the Korobov space for the choice of $\kappa = 2^{1/q}$, $1 \le q \le \infty$.
\end{theorem}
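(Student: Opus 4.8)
The plan is to reduce the whole statement to the ordinary lattice-rule analysis of \RefSec{sec:lattice-rules} by exploiting one elementary property of the tent transform. First I would record the key identity: for every integer $h$ and every $x \in [0,1]$ one has $\cos(\pi h \phi(x)) = \cos(2\pi h x)$. Indeed, on $[0,\tfrac12]$ one has $\phi(x) = 2x$, while on $[\tfrac12,1]$ one has $\phi(x) = 2 - 2x$ and then $\cos(\pi h (2-2x)) = \cos(2\pi h x)$ because $\cos$ is even and $2\pi h$-periodic. Hence applying $\phi$ coordinatewise to a lattice point and then evaluating a cosine basis function is the same as evaluating $\cos(2\pi h_j x_j)$ directly at the untransformed lattice point $\bsx_k = \bsz k/N \bmod 1$.

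Next I would expand each cosine into exponentials, $\kappa\cos(2\pi h_j x_j) = \tfrac{\kappa}{2}(\E^{\twopii h_j x_j} + \E^{-\twopii h_j x_j})$, so that
\begin{align*}
  \prod_{j\in\setu(\bsh)} \kappa\cos(2\pi h_j x_{k,j})
  &=
  (\kappa/2)^{|\setu(\bsh)|}\sum_{\bst : |\bst| = \bsh} \E^{\twopii \bst\cdot\bsx_k}
  ,
\end{align*}
the inner sum running over the $2^{|\setu(\bsh)|}$ sign patterns $\bst \in \Z^s$ with componentwise absolute value $|\bst| = \bsh$. Inserting the absolutely convergent cosine series for $f$, interchanging the (absolutely convergent) sums, and averaging over $k$, the character property \RefEq{eq:character-prop} collapses each exponential average to the dual-lattice indicator $\ind{\bst\in L^\perp}$. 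Re-indexing the resulting double sum as a single sum over $\bst\in\Z^s$ with $|\bst|$ as the cosine index, and noting that the term $\bst=\bszero$ reproduces $\hat f_{\bszero} = I(f)$, gives exactly the claimed formula for $E_N(f;\bsz,\phi)$.

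For the Koksma--Hlawka-type bound I would apply H\"older directly to this sum over $\{\bszero\ne\bst\in\Z^s : \bst\cdot\bsz\equiv0\}$, after the splitting $(\kappa/2)^{|\setu(\bst)|} = 2^{-|\setu(\bst)|/p}\cdot\kappa^{|\setu(\bst)|}2^{-|\setu(\bst)|/q}$, pairing the first factor with $\hat f_{|\bst|}\,r_{\alpha,\bsgamma}(\bst)$ and the second with $r_{\alpha,\bsgamma}(\bst)^{-1}$. The $q$-factor is then precisely the stated $e(\bsz,N,\phi;|||\cdot|||_{p,\alpha,\bsgamma})$. For the $p$-factor I would regroup by $\bsh=|\bst|$; since the sign-flip multiplicity $c(\bsh) := \#\{\bst\in L^\perp : |\bst|=\bsh\}$ satisfies $c(\bsh)\le 2^{|\setu(\bsh)|}$, the weight $2^{-|\setu(\bsh)|}$ absorbs this multiplicity and bounds the $p$-factor by $|||f|||_{p,\alpha,\bsgamma}$ (the full semi-norm over $\N_0^s$, not merely its restriction to the dual). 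The cases $p=\infty$ ($q=1$) and $p=1$ ($q=\infty$) follow by the same splitting with the sums and suprema taken in the obvious limiting form.

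Finally, the identification with the Korobov worst-case error is a one-line substitution: taking $\kappa = 2^{1/q}$ makes $\kappa^q = 2$, so $(\kappa^q/2)^{|\setu(\bst)|} = 1$ for every $\bst$, and the expression for $e(\bsz,N,\phi;|||\cdot|||_{p,\alpha,\bsgamma})$ collapses to $\left(\sum_{\bszero\ne\bsh\in L^\perp} r_{\alpha,\bsgamma}(\bsh)^{-q}\right)^{1/q}$, which is the lattice-rule worst-case error \RefEq{eq:wce-lattice-rule}; for $q=\infty$ one has $\kappa=1$ and recovers \RefEq{eq:wce-lattice-rule-infty}. I expect the main obstacle to be bookkeeping rather than conceptual: keeping the factor $(\kappa/2)^{|\setu(\bst)|}$, the multiplicity $c(\bsh)$, and the H\"older exponents consistent across all $1\le p\le\infty$, and in particular verifying that the split of $(\kappa/2)^{|\setu(\bst)|}$ really yields the full $\N_0^s$ semi-norm as the first factor. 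One subtlety worth flagging is that, unlike the Fourier case, this $e$ is the H\"older upper bound and need not be attained, because $L^\perp$ is not closed under individual sign changes; the asserted coincidence with the Korobov worst-case error is thus most cleanly read at the level of these computable expressions, which is how $e$ is used throughout the paper.
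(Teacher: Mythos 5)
Your proof follows essentially the same route as the paper's: the identity $\cos(\pi h\phi(x))=\cos(2\pi h x)$, expansion of the cosines into signed exponentials, re-indexing the sum over $\N_0^s$ with sign patterns as a single sum over $\Z^s$, the character property, and the same H\"older split of $(\kappa/2)^{|\setu(\bsh)|}$ into $2^{-|\setu(\bsh)|/p}$ and $\kappa^{|\setu(\bsh)|}2^{-|\setu(\bsh)|/q}$. The only cosmetic difference is that the paper applies H\"older over all of $\Z^s$ with the dual-lattice indicator placed in the second factor (so the first factor is exactly $|||f|||_{p,\alpha,\bsgamma}$), whereas you restrict the sum to $L^\perp$ and bound the first factor by the full semi-norm via the multiplicity $c(\bsh)\le 2^{|\setu(\bsh)|}$; both give the stated bound, and your closing observation that this $e$ is the H\"older bound rather than a provably attained supremum (since $L^\perp$ need not be closed under individual sign flips) is correct and not discussed in the paper.
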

\begin{proof}
Since, for any $h \in \N_0$,
\begin{align*}
  \cos(\pi h \phi(x))
  &=
  \cos(2\pi h x)
  ,
  \qquad
  \text{for all } 0 \le x \le 1
  ,
\end{align*}
it follows that, for any $\bsh \in \N_0^s$,
\begin{align*}
  \prod_{j \in \setu(\bsh)} \kappa \cos(\pi h_j \phi(x_j))
  &=
  \prod_{j \in \setu(\bsh)} \kappa \cos(2 \pi h_j x_j)
  \\
  &=
  (\kappa/2)^{|\setu(\bsh)|}
  \prod_{j \in \setu(\bsh)} \left( \E^{\twopii h_j x_j} + \E^{-\twopii h_j x_j} \right)
  \\
  &=
  (\kappa/2)^{|\setu(\bsh)|}
  \sum_{\bssigma_{\setu} \in \{\pm1\}^{|\setu(\bsh)|}} 
  \prod_{j \in \setu(\bsh)} \E^{\twopii \sigma_j h_j x_j}
  ,
\end{align*}
where for $\setu = \emptyset$, i.e., $\bsh = \bszero$, the sum over $\bssigma_{\setu}$ is to be interpreted as the identity operator.
Obviously, and with the same interpretation when $\setu = \emptyset$ and $A$ an arbitrary function,
\begin{align*}
  \sum_{\bsh \in \Z^s} A(\bsh)
  &=
  \sum_{\bsh \in \N_0^s} 
  \sum_{\bssigma_{\setu} \in \{\pm1\}^{|\setu(\bsh)|}} 
  A(\bssigma_\setu \bsh)
  ,
  &
  \text{where }
  (\bssigma_\setu \bsh)_j
  &=
  \begin{cases}
    \sigma_j h_j & \text{if } j \in \setu , \\
    h_j          & \text{otherwise} .
  \end{cases}
\end{align*}
Thus, since for $\bsh \in \N_0^s$ and any $\sigma_\setu \in \{\pm1\}^{|\setu(\bsh)|}$, $\hat{f}_{\bsh} = \hat{f}_{|\sigma_\setu \bsh|}$ and $|\setu(\sigma_\setu \bsh)| = |\setu(\bsh)|$, then it follows that
\begin{align*}
  E_N(f; \bsz, \phi)
  &=
  \sum_{\bszero \ne \bsh \in \N_0^s}
  \hat{f}_{\bsh}
  \,
  \frac1N \sum_{k=0}^{N-1} \prod_{j \in \setu(\bsh)} \kappa \cos(\pi h_j \phi(x_j^{(k)}))
  \\
  &=
  \sum_{\bszero \ne \bsh \in \N_0^s}
  \hat{f}_{\bsh}
  \,
  (\kappa/2)^{|\setu(\bsh)|}
  \sum_{\bssigma_\setu \in \{\pm1\}^{|\setu(\bsh)|}}
  \frac1N \sum_{k=0}^{N-1}
  \prod_{j \in \setu(\bsh)} \E^{\twopii \sigma_j h_j x_j^{(k)}}
  \\
  &=
  \sum_{\bszero \ne \bsh \in \N_0^s}
  \sum_{\bssigma_\setu \in \{\pm1\}^{|\setu(\bsh)|}}
  \hat{f}_{|\bssigma_\setu \bsh|}
  \,
  (\kappa/2)^{|\setu(\bssigma_\setu\bsh)|}
  \frac1N \sum_{k=0}^{N-1}
  \E^{\twopii (\bssigma_\setu \bsh) \cdot \bsx^{(k)}}
  \\
  &=
  \sum_{\bszero \ne \bsh \in \Z^s}
  \hat{f}_{|\bsh|}
  \,
  (\kappa/2)^{|\setu(\bsh)|}
  \frac1N \sum_{k=0}^{N-1}
  \E^{\twopii \bsh \cdot \bsz k / N}
  \\
  &=
  \sum_{\bszero \ne \bsh \in L^\perp}
  \hat{f}_{|\bsh|}
  \,
  (\kappa/2)^{|\setu(\bsh)|}
  ,
\end{align*}
where $L^\perp = \{ \bsh \in \Z^s: \bsh \cdot \bsz \equiv 0 \pmod{N} \}$ is the dual of the original lattice rule.
Applying H\"older's inequality to
\begin{align*}
  E_N(f; \bsz, \phi)
  &=
  \sum_{\bszero \ne \bsh \in \Z^s}
  \left(
  \hat{f}_{|\bsh|} \, 2^{-|\setu(\bsh)|/p} \, r_{\alpha,\bsgamma}(\bsh)
  \right)
  \left(
  r_{\alpha,\bsgamma}(\bsh)^{-1} \, \ind{\bsh \in L^\perp} \, 2^{-|\setu(\bsh)|/q} \, \kappa^{|\setu(\bsh)|}
  \right)
\end{align*}
yields the result, with equality to the Korobov worst-case error for the choice $\kappa = 2^{1/q}$.
\end{proof}

The tent transform (under the name \emph{baker's transform}) occurred in \cite{Hic2002} to attain respectively $O(N^{-1+\epsilon})$ and $O(N^{-2+\epsilon})$ convergence for randomly-shifted and then tent-transformed lattice rules in the unanchored Sobolev space of smoothness $r=1$ and $r=2$.
In \cite{DNP2013} it was shown however that no random shifting is needed for the case $r=1$ as the cosine space with $\alpha=1$ and $\kappa=\sqrt2$ coincides with the unanchored Sobolev space with $r=1$ with the weights scaled by $1/\pi^2$.

\subsection{Polynomial lattice rules and Walsh spaces}\label{sec:poly-spaces}

In \RefSec{sec:poly-lattice-rules} the \emph{Walsh space} was already mentioned.
The $r_{\alpha,\bsgamma}$ function takes the form, $\alpha > 1/q$,
\begin{align*}
  r_{\alpha,\bsgamma}(\bsh)
  &=
  \gamma_{\setu(\bsh)}^{-1/2} \prod_{j \in \setu(\bsh)} b^{\alpha \floor{\log_b h_j}}
  ,
\end{align*}
and so the function $\chi$ from\RefEq{eq:poly-chi} takes the form
\begin{align*}
  \chi(\bsx; |||\cdot|||_{p,\alpha,\bsgamma})
  &=
  \sum_{\emptyset \ne \setu \subseteq \{1:s\}}
  \gamma_{\setu}^{q/2}
  \prod_{j\in\setu}
  \omega(x_j)
  ,
  \qquad \text{ where }
  \omega(x)
  =
  \sum_{h=1}^\infty
  \frac{\wal_{b,h}(x)}{b^{q \alpha\floor{\log_b h}}}
  .
\end{align*}
The infinite sum above can be written in closed form, see \cite{DKPS2005}.
E.g., for $\alpha=2$, $p=\infty$ and $q=1$, and $b=2$ (the most practical case on a digital computer) this becomes
\begin{align}\label{eq:omega-walsh}
  \omega(x)
  =
  \sum_{h=1}^\infty
  \frac{\wal_{2,h}(x_j)}{2^{2 \floor{\log_2 h}}}
  &=
  12 \left( \tfrac16 - 2^{\floor{\log_2 x} - 1} \right)
  .
\end{align}
As is the case for lattice rules, also here a quantity $R_\alpha$ could be defined for which it is not needed that $\alpha > 1/q$ by using a finite dimensional basis.
The basis is then restricted to $\Lambda = \{0,\ldots,N-1\}^s$.
This is typically done for $\alpha = 1$ and a modified (and unweighted) $r_1(h) = \prod_{j=1}^s \rho_b(h_j)$ with $\rho_b(0) = 1$ and $\rho_b(h) = (b^{a+1} \sin(\pi h_a / b))^{-1}$ for $h = h_0 + \cdots + h_a b^a$ and $h_a \ne 0$, i.e., $a = \floor{\log_b h}$.
For $b=2$ this matches with $r_{\alpha,\bsgamma}$ from above with $\gamma_\setu \equiv 1$ and $\alpha = 1$.
See, e.g., \cite{Nie92,DP2010,Pil2012}, with slight variations depending on the source.

Random shifting can also be done for polynomial lattice rules.
A \emph{digitally-shifted polynomial lattice rule} adds a shift $\bsDelta \in [0,1)^s$ to all of the points, as in\RefEq{eq:shifted}, but in a digital way:
\begin{align*}
  \bsy'_k
  &=
  \bsy_k \oplus_b \bsDelta
  ,
  &
  y'_{k,j,i}
  &=
  (y_{k,j,i} + \Delta_{j,i}) \bmod{b}
  ,
\end{align*}
where in the last equation the $y_{k,j,i}$ are the base~$b$ digits of $y_{k,j}$ and similar for $\Delta_{j,i}$ and $y'_{k,j,i}$.
(Note that due to the finite expansion of the $y_{k,j}$, i.e., up to $n$ base~$b$ digits, this is well defined.)
Using the technology of reproducing kernel Hilbert spaces it was shown in \cite{DP2005} that the reproducing kernel of the unanchored Sobolev space with smoothness $r=1$ results in a worst-case integrand which matches the Walsh space. 
E.g., for $b=2$
\begin{align*}
  \omega(x)
  &=
  \tfrac16 - 2^{\floor{\log_2 x} - 1}
  ,
\end{align*}
compare with\RefEq{eq:omega-walsh}.
This agrees with the similar case of lattice rules in \RefSec{sec:randomly-shifted}.

The power of polynomial lattice rules rest in the fact that it is possible to also embed certain Sobolev spaces with $r\ge2$ (but not $r=1$) into a special Walsh space, see \cite{Dic2008}.
For this the $r_{\alpha,\bsgamma}$ function needs to take on a slightly more complicated form.
Consider the unique base~$b$ expansion of $h \in \N_0$ written as
\begin{align*}
  h = (\cdots h_2 h_1 h_0)_b
  =
  \sum_{i=0}^{\infty} h_i \, b^i
  &=
  \sum_{i=1}^{\#h} h_{a_i} \, b^{a_i}
  ,
\end{align*}
where $\#h$ is the number of non-zero base~$b$ digits in the unique expansion of $h$, so all $h_{a_i} \in \{1,\ldots,b-1\}$ and $a_1 > \cdots > a_{\#h} \ge 0$, $a_1 = \floor{\log_b h}$.
With this representation in mind and for fixed integer $\alpha \ge 1$ now define the one-dimensional function
\begin{align*}
  r_\alpha(h)
  &=
  b^{\sum_{i=1}^{\min(\#h, \alpha)} (a_i+1)}
\end{align*}
and the weighted product for the multivariate version $r_{\alpha,\bsgamma}(\bsh) = \gamma_{\setu(\bsh)}^{-1/2} \prod_{j \in \setu(\bsh)} r_\alpha(h_j)$.
This defines what is called a \emph{higher order Walsh space}.
Now if $n=\alpha m$ the worst-case error in this space can be bounded by $O(N^{-\alpha+\epsilon})$, $\epsilon > 0$, for digital nets and also for polynomial lattice rules, see \cite{Dic2008,DP2007}.
In \cite{Dic2008} it is shown that functions in Sobolev spaces with $r \ge 2$ have Walsh coefficients which decay faster than the above $r_{\alpha,\bsgamma}(\bsh)^{-1}$ and thus for $p=\infty$ these spaces are embedded in the higher order Walsh space.
In \cite{BDLNP2012} the following worst-case functions were obtained explicitly for $b=2$
  \begin{align*}
    \omega_2(x)
    &=
    s_1(x) + \tilde{s}_2(x)
    ,
    &
    \omega_3(x)
    &=
    s_1(x) + s_2(x) + \tilde{s}_3(x)
    ,
  \end{align*}
  where
  \begin{align*}
    s_1(x) &= 1 - 2 x
    ,
    \\
    s_2(x) &= 1/3 - 2 (1-x) x
    ,
    \\
    \tilde{s}_2(x) &= (1 - 5 t_1) / 2 - (a_1 - 2) x
    ,
    \\
    \tilde{s}_3(x) &= (1 - 43 t_2)/18 + (5 t_1 - 1) x + (a_1 - 2) x^2
    ,
  \end{align*}
  with, for $0 < x < 1$,
  \begin{align*}
    a_1 &= - \floor{\log_2(x)}
    ,
    &
    t_1 &= 2^{-a_1}
    ,
    &
    t_2 &= 2^{-2 a_1}
    ,
  \end{align*}
  and $a_1 = 0$, $t_1 = 0$ and $t_2 = 0$ when $x=0$.
Then $\omega_2(x)$ is the $\omega$ function for $\alpha = 2$ and $\omega_3(x)$ for $\alpha = 3$.
An algorithm to calculate $\omega(k/b^m)$, $k=0,\ldots,b^m-1$, for any $\alpha$ and $b$ is also given in \cite{BDLNP2012}.

Also the tent-transform can be applied to polynomial lattice rules in a similar form as in \cite{Hic2002}, see \RefSec{sec:tent} to improve the convergence rate, see \cite{CDLP2007}.

\section{Component-by-component constructions}\label{sec:construction}

Given the analysis from the previous sections it is now possible to try and find a generating vector $\bsz^*$ that achieves almost the best-possible convergence rate of the worst-case error.
For $1 < p \le \infty$ this is possible by minimizing the error of the function $\chi$ given by\RefEq{eq:chi}, or\RefEq{eq:poly-chi}, for all choices of $\bsz$.
However, the number of choices for $\bsz$ is excessively large, e.g., in the case of lattice rules, the number of choices is roughly $|\Z_N^s| = N^s$ and a similar statement is true for polynomial lattice rules.
Korobov \cite{Kor59} already found that the generating vector can be constructed component-by-component in the classical, i.e., unweighted, Korobov space.
This was later rediscovered and generalized by Sloan et al., e.g., see \cite{SR2002,SKJ2002,SKJ2001}.

\subsection{Component-by-component construction}

First some assertions are made which are true for all the spaces defined in this manuscript.
The decay function can be split into a product of the weight $\gamma_{\setu(\bsh)}$ and a part determining the convergence of the series expansion $r_\alpha(\bsh)$, i.e., $r_{\alpha,\bsgamma}(\bsy) = \gamma_{\setu(\bsh)}^{-1/2} \, r_\alpha(\bsh)$.
Furthermore the unweighted part $r_\alpha(\bsh)$ can be written as a product and is ``zero neutral'' (or embedded), i.e., $r_\alpha(h_1,h_2,0) = r_\alpha(h_1,h_2)$.
In other words
\begin{align}\label{A1}
  r_\alpha(\bsh)
  &=
  \prod_{j \in \setu(\bsh)} r_\alpha(h_j)
  ,
\end{align}
where the functions for the one-dimensional parts could in fact be chosen differently if needed.
For definiteness, it is assumed that the series expansions converge with an algebraic rate such that for $h \ne 0$ the following holds
\begin{align}\label{eq:algebraic}
  r_\alpha(|G| h)
  &\ge
  N^\alpha \, r_\alpha(h)
  ,
\end{align}
where $G=\Z_N$ and thus $|G| = N$ for lattice rules in a Fourier space, $G=G_{b,m}$ and thus $|G| = N$ for polynomial lattice rules in the Walsh space and $G=G_{b,n}$ with $n=\alpha m$, and thus $|G| = b^{\alpha m}$ for higher order polynomial lattice rules in the higher order Walsh space.
A further assumption is that
\begin{align}\label{A3}
  \sum_{0 \ne h \in \Lambda_{\{j\}}} r_\alpha(h_j)^{-1} 
  &< \infty
  \qquad \text{ for all } \alpha > 1
  ,
\end{align}
where $\Lambda_{\{j\}} = \{ \bsh \in \Lambda : h_j \ne 0 \text{ and } h_{j'} = 0 \text{ for all } j' \ne j \}$.
These conditions are true for all $r_\alpha(\bsh)$ functions considered in this manuscript.

Similar assumptions as for $r_{\alpha}$ are made for the basis functions, i.e.,
\begin{align}\label{A2}
  \varphi_{\bsh}(\bsx)
  =
  \prod_{j \in \setu(\bsh)} \varphi_{h_j}(x_j)
  ,
\end{align}
and thus $\Lambda = \prod_{j = 1}^s \Lambda_{\{j\}}$, where again, the one-dimensional functions could be different for the different dimensions if needed.

The component-by-component algorithm constructs the generating vector of a (polynomial) lattice rule dimension by dimension: in each step extending the dimension by one and finding the best $z_s$, denoted below by $z_s^*$, while keeping all previous components of the generating vector $z_j^*$, $j < s$, fixed.
The first step for the component-by-component construction is to write the worst-case error in a recursive form:
\begin{align}\notag
  \hspace{1cm}&\hspace{-1cm}e_s(\bsz, N; |||\cdot|||_{p,\alpha,\bsgamma})^q
  \\\notag
  &=
  \sum_{\emptyset \ne \setu \subseteq \{1:s\}}
  \gamma_\setu^{q/2}
  \sum_{\bsh_\setu \in L^\perp_\setu}
  r_\alpha(\bsh_\setu)^{-q}
  \\\notag
  &=
  \sum_{\emptyset \ne \setu \subseteq \{1:s-1\}}
  \gamma_\setu^{q/2}
  \sum_{\bsh_\setu \in L^\perp_\setu}
  r_\alpha(\bsh_\setu)^{-q}
  +
  \sum_{s \in \setu \subseteq \{1:s\}}
  \gamma_\setu^{q/2}
  \sum_{\bsh_\setu \in L^\perp_\setu}
  r_\alpha(\bsh_\setu)^{-q}
  \\\label{eq:wce-recursion}
  &=
  e_{s-1}((z_1,\ldots,z_{s-1}), N; |||\cdot|||_{p,\alpha,\bsgamma})^q
  +
  \theta_s(z_s)
  ,
\end{align}
where the subscripts $s$ and $s-1$ were added to make the number of dimensions explicit and the dependence of $\theta_s$ on $z_j$ for $j < s$ is suppressed as they are considered already fixed when determining $z_s^*$.
It is clear that for the optimal choice of $z_s$, while keeping all previous choices fixed, only $\theta_s(z_s)$ needs to be evaluated.
Note that this step implicitly assumes that $r_\alpha$ is ``zero neutral''.

The following theorem shows that the component-by-component algorithm can find good rules. 
The proof is written such that it applies to both lattice rules and polynomial lattice rules.
\begin{theorem}[Component-by-component construction for prime $N$ or irreducible $P$]\label{thm:cbc}
  Assume that\RefEq{A1}--\eqref{A2} holds.
  Let in the case of lattice rules $G = \Z_N$, $N$ be prime, $\Lambda_\setu = \Z_\setu$ and $\varphi_{\bsh}$ the Fourier basis and, in the case of polynomial lattice rules $G = G_{b,n}$, $P$ be irreducible over $\F_b$, $\Lambda_\setu = \N_\setu$  and $\varphi_{\bsh}$ the Walsh basis.

  Then, for fixed $\lambda$ with $1 \le \lambda < \alpha q$, a generating vector $\bsz^* \in G^s$ can be found, component-by-component, minimizing the worst-case error in each step for each choice $z_s^*$, given the best previous choices $z_j^*$, for $j < s$.
  The worst-case error then satisfies for each~$s$
\begin{align*}
  e_s(\bsz^*, N; |||\cdot|||_{p,\alpha,\bsgamma})
  &\le
  \left(
  \frac2N
  \sum_{\emptyset \ne \setu \subseteq \{1:s\}}
  \gamma_\setu^{q/2\lambda}
  \sum_{\bsh_\setu \in \Lambda_\setu}
  r_\alpha(\bsh_\setu)^{-q/\lambda}
  \right)^{\lambda/q}
  .
  \end{align*}
\end{theorem}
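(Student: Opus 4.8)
The plan is to prove the bound by induction on the dimension $s$, driving everything through the recursion \eqref{eq:wce-recursion} with the help of two convexity facts and one averaging estimate. Since $1\le\lambda<\alpha q$, the map $t\mapsto t^{1/\lambda}$ is concave on $[0,\infty)$ and vanishes at the origin, hence subadditive; applying it to \eqref{eq:wce-recursion} gives $e_s(\bsz^*,N)^{q/\lambda}\le e_{s-1}(\bsz^*,N)^{q/\lambda}+\theta_s(z_s^*)^{1/\lambda}$, and unrolling from $s$ down to $1$ (with $e_0=0$) yields $e_s(\bsz^*,N)^{q/\lambda}\le\sum_{t=1}^{s}\theta_t(z_t^*)^{1/\lambda}$. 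It then suffices to bound each $\theta_t(z_t^*)^{1/\lambda}$ and reindex: every nonempty $\setu\subseteq\{1:s\}$ has a unique largest element $t=\max\setu$, so $\sum_{t=1}^s\sum_{t\in\setu\subseteq\{1:t\}}$ collapses to $\sum_{\emptyset\ne\setu\subseteq\{1:s\}}$, after which a final $\lambda/q$ power recovers the stated estimate.

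To bound a single step I would use that the component-by-component rule selects $z_t^*$ as a minimiser of $\theta_t$, hence also of $\theta_t^{1/\lambda}$, so $\theta_t(z_t^*)^{1/\lambda}\le|G|^{-1}\sum_{z_t\in G}\theta_t(z_t)^{1/\lambda}$ (minimum at most mean). Writing $\theta_t(z_t)=\sum_{t\in\setu\subseteq\{1:t\}}\gamma_\setu^{q/2}\sum_{\bsh_\setu\in L^\perp_\setu}r_\alpha(\bsh_\setu)^{-q}$ as a sum of nonnegative terms and applying subadditivity of $t\mapsto t^{1/\lambda}$ once more moves the power inside, producing the majorant $\sum_{t\in\setu\subseteq\{1:t\}}\gamma_\setu^{q/(2\lambda)}\sum_{\bsh_\setu\in L^\perp_\setu}r_\alpha(\bsh_\setu)^{-q/\lambda}$. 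Replacing $\sum_{\bsh_\setu\in L^\perp_\setu}$ by $\sum_{\bsh_\setu\in\Lambda_\setu}\ind{\bsh_\setu\in L^\perp}$ and interchanging the averaging over $z_t$ with the (now $z_t$-independent) sum over $\bsh_\setu$ reduces each $\setu$ to the key averaging estimate
\[
  \frac1{|G|}\sum_{z_t\in G}\sum_{\bsh_\setu\in\Lambda_\setu}r_\alpha(\bsh_\setu)^{-q/\lambda}\,\ind{\bsh_\setu\in L^\perp}
  \;\le\;
  \frac2N\sum_{\bsh_\setu\in\Lambda_\setu}r_\alpha(\bsh_\setu)^{-q/\lambda},
\]
which, weighted by $\gamma_\setu^{q/(2\lambda)}$ and summed over $t\in\setu\subseteq\{1:t\}$, gives the single-step bound.

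The hard part is this averaging estimate, and it is precisely where the algebraic-decay hypothesis \eqref{eq:algebraic}, the product structure \eqref{A1}, the summability \eqref{A3}, and the restriction $\lambda<\alpha q$ all enter. I would fix $\bsh_\setu$ (so $h_t\ne0$ because $t\in\setu$), factor $r_\alpha(\bsh_\setu)^{-q/\lambda}=r_\alpha(\bsh_{\setu\setminus\{t\}})^{-q/\lambda}\,r_\alpha(h_t)^{-q/\lambda}$ using \eqref{A1}, and split on whether $|G|$ divides $h_t$. For the generic frequencies $|G|\nmid h_t$ the averaged indicator equals exactly $1/N$: in the lattice case this is \eqref{eq:sum-over-ZN}, since for prime $N$ the single congruence $h_t z_t\equiv-\sum_{j<t}h_jz_j\pmod N$ has one solution; in the polynomial case it is the $k=0$ contribution of \eqref{eq:sum-over-Gbn}, the terms $k\ne0$ vanishing because $P$ is irreducible. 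These contribute at most $\tfrac1N\sum_{\bsh_\setu\in\Lambda_\setu}r_\alpha(\bsh_\setu)^{-q/\lambda}$. For the exceptional frequencies $|G|\mid h_t$ the averaged indicator is only bounded by $1$, so instead I would control their total weight: writing $h_t=|G|\,h'$ and invoking \eqref{eq:algebraic} gives $r_\alpha(h_t)^{-q/\lambda}\le N^{-\alpha q/\lambda}r_\alpha(h')^{-q/\lambda}$, and summing over $h'\ne0$ (finite by \eqref{A3}, which applies exactly because $\alpha q/\lambda>1$) bounds their contribution by $N^{-\alpha q/\lambda}\sum_{\bsh_\setu\in\Lambda_\setu}r_\alpha(\bsh_\setu)^{-q/\lambda}$. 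Since $\alpha q/\lambda>1$ forces $N^{-\alpha q/\lambda}\le N^{-1}$, the two pieces combine into the factor $\tfrac1N+N^{-\alpha q/\lambda}\le\tfrac2N$, which is the origin of the constant $2$. Assembling this single-step estimate into the telescoped sum and taking the $\lambda/q$ power then completes the argument, uniformly for $1<p\le\infty$, that is $1\le q<\infty$.
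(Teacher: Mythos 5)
Your proposal is correct and follows essentially the same route as the paper: minimum bounded by the mean over $z_s\in G$, Jensen/subadditivity of $t\mapsto t^{1/\lambda}$, the split on $|G|\mid h_s$ versus $|G|\nmid h_s$ using\RefEq{eq:sum-over-ZN}, \RefEq{eq:sum-over-Gbn}, \RefEq{eq:algebraic} and\RefEq{A3} to produce the factor $\tfrac1N+N^{-\alpha q/\lambda}\le\tfrac2N$. The only cosmetic differences are that you telescope $e_s^{q/\lambda}\le\sum_t\theta_t(z_t^*)^{1/\lambda}$ and reindex by $\max\setu$ where the paper runs the induction on\RefEq{eq:wce-recursion} and recombines with the reverse Jensen inequality, and that you average the dual-lattice indicator over $z_t$ directly by counting solutions of the congruence rather than via the character sum over $k$.
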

\begin{proof}
The proof uses that for $\lambda \ge 1$ and positive $a_k$ the following holds
$
  \left(\sum_{k} a_k\right)^{1/\lambda}
  \le
  \sum_{k} a_k^{1/\lambda}
$
and the inequality is reversed in case $\lambda \le 1$.
This is often called Jensen's inequality.
For fixed $\lambda \ge 1$ the optimal choice in dimension $s$, denoted $z_s^*$, should do at least as good as the average over all possible choices $z_s \in G$ and this still holds if all quantities are risen to the power $1/\lambda \le 1$. 
Under the stated assumptions for $s=1$:
  \begin{align} \DG{ \notag }{}
    e_1(z_1^*, N; |||\cdot|||_{p,\alpha,\bsgamma})
    &=
    \left(\gamma_{\{1\}}^{q/2} \sum_{0 \ne h \in \Lambda} r_\alpha(|G|h)^{-q}\right)^{1/q}
    \DG{ \\ }{} \label{eq:induction-hypot}
    & 
    \DG{}{\hspace{-3mm}}
    \le
    \left(\frac2N \, \gamma_{\{1\}}^{q/2\lambda} \sum_{0 \ne h \in \Lambda} r_\alpha(h)^{-q/\lambda}\right)^{\lambda/q}
    .
  \end{align}
For $s\ge2$ the optimal choice $z_s^*$ satisfies 
\begin{align*}
  &
  \DG{}{\hspace{-5mm}}
  \left(\theta_s(z_s^*)\right)^{1/\lambda}
  \\
  &\le
  \frac1{|G|} \sum_{z_s \in G} \left(\theta_s(z_s)\right)^{1/\lambda}
  \\
  &\le
  \frac1{|G|} \sum_{z_s \in G}
  \sum_{s \in \setu \subseteq \{1:s\}}
  \gamma_\setu^{q/2\lambda}
  \sum_{\bsh_\setu \in L^\perp_\setu}
  r_\alpha(\bsh_\setu)^{-q/\lambda}
  \\
  &=
  \!\!
  \sum_{s \in \setu \subseteq \{1:s\}}
  \gamma_\setu^{q/2\lambda}
  \sum_{\bsh_\setu \in \Lambda_\setu}
  r_\alpha(\bsh_\setu)^{-q/\lambda}
  \frac1N \sum_{k=0}^{N-1}
  \prod_{s \ne j \in \setu} \varphi_{h_j}(x_{k,j}(z_j))
  \frac1{|G|} \sum_{z_s \in G}
  \varphi_{h_s}(x_{k,s}(z_s)) 
  \\
  &\le
  \!\!
  \sum_{s \in \setu \subseteq \{1:s\}}
  \!\!
  \gamma_\setu^{q/2\lambda}
  \sum_{\bsh_\setu \in \Lambda_\setu}
  \!\!
  r_\alpha(\bsh_\setu)^{-q/\lambda}
  \frac1N 
  \!\!
  \sum_{k=0}^{N-1}
  \left| \prod_{s \ne j \in \setu} 
  \!\!
  \varphi_{h_j}(x_{k,j}(z_j)) \right| 
  \left| 
  \frac1{|G|} 
  \!\!
  \sum_{z_s \in G}
  \!
  \varphi_{h_s}(x_{k,s}(z_s)) 
  \right|
  \\
  &\le
  \!\!
  \sum_{s \in \setu \subseteq \{1:s\}}
  \gamma_\setu^{q/2\lambda}
  \sum_{\bsh_\setu \in \Lambda_\setu}
  r_\alpha(\bsh_\setu)^{-q/\lambda}
  \frac1N \sum_{k=0}^{N-1}
  \begin{cases}
    1 & \text{if $k = 0$ or $h_s \equiv 0 \pmod{|G|}$}, \\
    0 & \text{otherwise}
  \end{cases}
  \\
  &=
  \!\!
  \sum_{s \in \setu \subseteq \{1:s\}}
  \gamma_\setu^{q/2\lambda}
  \left[
  \sum_{\substack{\bsh_\setu \in \Lambda_\setu \\ h_s \not\equiv 0 \tpmod{|G|}}}
  \hspace{-4mm}\frac{r_\alpha(\bsh_\setu)^{-q/\lambda}}{N}
  +
  \sum_{\bsh_\setu \in \Lambda_\setu}
  \prod_{s \ne j \in \setu} r_\alpha(h_j)^{-q/\lambda}
  \, r_\alpha(|G| h_s)^{-q/\lambda}
  \right]
  \\
  &\le
  \left(\frac1{N^{\alpha q / \lambda}} + \frac1N\right)
  \sum_{s \in \setu \subseteq \{1:s\}}
  \gamma_\setu^{q/2\lambda}
  \sum_{\bsh_\setu \in \Lambda_\setu}
  r_\alpha(\bsh_\setu)^{-q/\lambda}
  \\
  &\le
  \frac2N
  \sum_{s \in \setu \subseteq \{1:s\}}
  \gamma_\setu^{q/2\lambda}
  \sum_{\bsh_\setu \in \Lambda_\setu}
  r_\alpha(\bsh_\setu)^{-q/\lambda}
  ,
\end{align*}
where\RefEqTwo{eq:sum-over-ZN}{eq:sum-over-Gbn} were used as well as\RefEq{eq:algebraic} and $\alpha q / \lambda \ge 1$.
For convergence reasons, see\RefEq{A3} and the induction hypothesis\RefEq{eq:induction-hypot}, it is needed that $\alpha q / \lambda > 1$.
The result now holds by induction on\RefEq{eq:wce-recursion} as
\begin{align*}
  e_s(\bsz^*, N; |||\cdot|||_{p,\alpha,\bsgamma})^q
  &=
  e_{s-1}(\bsz^*, N; |||\cdot|||_{p,\alpha,\bsgamma})^q
  +
  \theta_s(z_s^*)
  \\
  &\le
  \left(
  \frac2N
  \sum_{\emptyset \ne \setu \subseteq \{1:s-1\}}
  \gamma_\setu^{q/2\lambda}
  \sum_{\bsh_\setu \in \Lambda_\setu}
  r_\alpha(\bsh_\setu)^{-q/\lambda}
  \right)^\lambda
  \\
  &\qquad\qquad+
  \left(
  \frac2N
  \sum_{s \in \setu \subseteq \{1:s\}}
  \gamma_\setu^{q/2\lambda}
  \sum_{\bsh_\setu \in \Lambda_\setu}
  r_\alpha(\bsh_\setu)^{-q/\lambda}
  \right)^\lambda
  \\
  &\le
  \left(
  \frac2N
  \sum_{\emptyset \ne \setu \subseteq \{1:s\}}
  \gamma_\setu^{q/2\lambda}
  \sum_{\bsh_\setu \in \Lambda_\setu}
  r_\alpha(\bsh_\setu)^{-q/\lambda}
  \right)^\lambda
  . \qedhere
\end{align*}
\end{proof}

For all the spaces considered here, this results in the optimal rate of $O(N^{-\alpha+\epsilon})$, $\epsilon > 0$, see, e.g., \cite{NW2008}.
More explicitly, \RefThm{thm:cbc} gives the following result: For any $1/q \le \lambda < \alpha$ the worst-case error fulfills
\begin{align*}
  e_s(\bsz^*, N; |||\cdot|||_{p,\alpha,\bsgamma})
  &\le
  C_{s,\alpha,\bsgamma,\lambda} \,
  \frac{2^{\lambda}}{N^{\lambda}}
  ,
\end{align*}
where $C_{s,\alpha,\bsgamma,\lambda}$ depends on the weights and on the dimension.
If the weights decay fast enough then $C_{s,\alpha,\bsgamma,\lambda}$ can be replaced by an absolute constant $C_{\alpha,\bsgamma,\lambda}$ independent of $s$ and then the curse is vanished, see, 
for lattice rules, \cite{KS2005,DKS2013,NW2008,SKJ2001,SKJ2002,SW98,Woz2009,Was2013} for the conditions on the weights.
All these results, except \cite{Woz2009} and \cite{Was2013}, consider $p=2$ only, while \cite{Woz2009} also analyzes $p=\infty$ with not so dramatic changes and \cite{Was2013} studies tractability for general $p$.
For polynomial lattice rules see, e.g., \cite{DP2007,DKS2013}.

\subsection{Fast component-by-component construction}\label{sec:fast-cbc}

The fast component-by-component construction is a particular method of calculating the worst-case errors in the component-by-component construction from the previous section which makes use of fast convolution (by means of FFTs).
The fast component-by-component construction was first established for lattice rules with $N$ prime in \cite{NC2006-prime} and later extended for non-prime $N$ in \cite{NC2006}, see also \cite{Nuy2005}.
In \cite{NC2006-kernels} it was first demonstrated for polynomial lattice rules, see also \cite{Nuy2007}, and later for higher-order polynomial lattice rules in \cite{BDLNP2012}.
Also lattice sequences are possible, see \cite{CKN2006}.
The most recent construction is for SPOD weights \cite{DKGNS2013}.
Here only fixed rules with prime~$N$ or irreducible~$P(\X)$ will be considered.

Set
\begin{align*}
  \omega(x_{k,j})
  =
  \omega(k \cdot z_j)
  &=
  \sum_{0 \ne h \in \Lambda_{\{j\}}}
  r_\alpha(h)^{-q} \, \varphi_h(x_{k,j})
  ,
\end{align*}
where the notation $\omega(k \cdot z_j)$ is used to stress that this is a multiplication in the ring modulo~$N$ for lattice rules\RefEq{eq:lattice} or modulo~$P(\X)$ for polynomial lattice rules\RefEq{eq:poly-lattice}.
In particular, when $N$ is prime, or $P(\X)$ is irreducible, this is a field where $G \setminus \{0\} = \Z_N \setminus \{0\}$ or $\F_b[\X]/P(\X) \setminus \{ 0 \}$ is a cyclic multiplicative group.
The function $\omega$ was given in closed form for many spaces in \RefSec{sec:spaces}, but in principle it is sufficient if it can be calculated for each of $k/N$, $k=0,\ldots,N-1$.

Now from\RefEq{eq:wce-recursion} write
\begin{multline*}
  e_s((z_1^*, \ldots, z_{s-1}^*, z_s), N; |||\cdot|||_{p,\alpha,\bsgamma})^q
  \\=
  e_{s-1}((z_1^*, \ldots, z_{s-1}^*), N; |||\cdot|||_{p,\alpha,\bsgamma})^q
  +
  \theta_s(z_s)
\end{multline*}
where
\begin{align}\notag
  \theta_s(z_s)
  &=
  \sum_{\setu \subseteq \{1:s-1\}}
  \gamma_{\setu \cup \{s\}}
  \frac1N
  \sum_{k=0}^{N-1}
  Y_\setu(k)
  \,
  \omega(k \cdot z_s)
  =
  \frac1N
  \sum_{k=0}^{N-1}
  Y_s(k)
  \,
  \omega(k \cdot z_s)
  \\\label{eq:convolution}
  &=
  \frac1N \left( Y_s(0) \, \omega(0) + \sum_{k=1}^{N-1} Y_s(k) \, \omega(k \cdot z_s) \right)
\end{align}
with $Y_{\emptyset}(k) \equiv 1$ and 
\begin{align}\notag
  Y_\setu(k)
  &=
  \sum_{h_\setu \in \Lambda_\setu} \prod_{j\in\setu} r_\alpha(h_j)^{-q} \, \varphi_{h_j}(x_{k,j}(z^*_j))
  =
  \prod_{j \in \setu} \omega(k \cdot z_j^*)
  ,
  \intertext{and}
  \label{eq:Ys}
  Y_s(k)
  &=
  \sum_{\setu \subseteq \{1:s-1\}}
  \gamma_{\setu \cup \{s\}}
  \,
  Y_\setu(k)
  .
\end{align}
The sum in\RefEq{eq:convolution} for each choice of $z_s \in G \setminus \{ 0 \}$ is a circular convolution when expressing the element in terms of the generator $g$ of the cyclic group, $\langle g \rangle = G \setminus \{ 0 \}$ (as $N$ is prime or $P(\X)$ irreducible),
\begin{align*}
  \sum_{k=1}^{N-1} Y_s(k) \, \omega(k \cdot z_s)
  &=
  \sum_{\delta=0}^{N-2} Y_s(g^\delta \bmod{G}) \, \omega(g^{\delta + \vartheta} \bmod{G})
  ,
  \DG{}{\qquad} \text{ for all } z_s = g^{\vartheta} \in G \setminus \{0\}
  ,
\end{align*}
see also \cite{Nuy2005} for a very comprehensive explanation.
If $|G| > N$, as is the case for higher-order polynomial lattice rules, then this is in fact a sparse convolution, see \cite{BDLNP2012} for an analysis of this case.
Using FFTs the circular convolution can be done in $O(M \log M)$, with $M = |G|-1$, hence the annotation ``fast'' component-by-component construction.
For calculating $Y_s(k)$ the structure of the weights will be used.
The same weights from \RefSec{sec:weights} are here analysed with the cost per iteration of the component-by-component algorithm.
\begin{itemize}\setlength{\itemsep}{0mm}
  \item \emph{General weights}: for general weights there is no structure and so all values of $Y_\setu(k)$ need to be stored, which costs $O(2^s N)$ storage, and calculating\RefEq{eq:Ys} would cost $O(2^s N)$.
  \item \emph{Product weights}: for product weights $\gamma_\setu = \prod_{j\in\setu} \gamma_j$ it follows from\RefEq{eq:Ys} that
  \begin{align*}
    Y_s(k)
    &\hphantom{:}=
    \gamma_s \prod_{j=1}^{s-1} \bigl( 1 + \gamma_j \, \omega(x_{k,j}) \bigr)
    =
    \gamma_s \, P_{s-1}(k)
    ,
    \\
    \text{and }
    P_s(k)
    &:=
    \prod_{j=1}^s \bigl( 1 + \gamma_j \, \omega(x_{k,j}) \bigr)
    =
    \bigl(1 + \gamma_s \, \omega(x_{k,s}) \bigr) \, P_{s-1}(k)
    .
  \end{align*}
  The product can be stored and updated, i.e., overwritten, in each step of the component-by-component algorithm, needing a storage of $O(N)$ and an incremental calculation cost of $O(N)$.
  \item \emph{Order-dependent weights}: for order-dependent weights $\gamma_{\setu} = \Gamma_{|\setu|}$ it follows that
  \begin{align*}
    Y_s(k)
    &\hphantom{:}=
    \sum_{\ell=0}^{s-1} \Gamma_{\ell+1} 
    \left( \sum_{\substack{\setu \subseteq \{1:s-1\} \\ |\setu| = \ell}} \prod_{j\in\setu} \omega(x_{k,j}) \right)
    =
    \sum_{\ell=0}^{s-1} \Gamma_{\ell+1} 
    \,
    P_{s-1,\ell}(k)
    ,
    \\
    \text{and }
    P_{s,\ell}(k)
    &:=
    \sum_{\substack{\setu \subseteq \{1:s\} \\ |\setu| = \ell}} \prod_{j\in\setu} \omega(x_{k,j})
    =
    P_{s-1,\ell}(k)
    +
    P_{s-1,\ell-1}(k)
    \, 
    \omega(x_{k,s})
    .
  \end{align*}
  The $s$ sums of order $\ell=0,\ldots,s$ can be stored and updated, i.e., overwritten, in each step needing a storage $O(sN)$ and an incremental calculation cost of $O(sN)$.
  \item \emph{Finite-order-dependent weights}: for finite-order-dependent weights  $\Gamma_\ell = 0$ for $\ell > q^*$.
  The same analysis as above holds with $\ell=0,\ldots,q^*$.
  Storage cost is $O(q^*N)$ and the incremental calculation cost is also $O(q^*N)$.
  \item \emph{Product-and-order-dependent weights} (POD weights): for POD weights  $\gamma_{\setu} = \Gamma_{|\setu|} \prod_{j \in \setu} \beta_j$ the same result as for order-dependent weights is obtained as it does not matter if the function $\omega$ stays the same for different dimensions, it can be multiplied with $\beta_j$:
  \begin{align*}
    Y_s(k)
    &\hphantom{:}=
    \sum_{\ell=0}^{s-1} \Gamma_{\ell+1} 
    \left( \sum_{\substack{\setu \subseteq \{1:s-1\} \\ |\setu| = \ell}} \prod_{j\in\setu} \beta_j \, \omega(x_{k,j}) \right)
    ,
    \\
    \text{and }
    P_{s,\ell}(k)
    &:=
    \sum_{\substack{\setu \subseteq \{1:s\} \\ |\setu| = \ell}} \prod_{j\in\setu} \beta_j \,\omega(x_{k,j})
    =
    P_{s-1,\ell}(k)
    +
    P_{s-1,\ell-1}(k)
    \, \beta_j \,
    \omega(x_{k,s})
    .
  \end{align*}
  The costs are the same as for order-dependent weights: $O(sN)$ memory cost and $O(sN)$ update cost.
  \item \emph{Smoothness-driven product-and-order-dependent weights} (SPOD weights): the calculation for the SPOD weights is more involved, the reader is referred to \cite{DKGNS2013} where it is shown that, for the specific construction in that paper, the memory cost is $O(\alpha s N)$ and the update cost is $O(\alpha^{2} s N)$.
  \item \emph{Finite-diameter weights}: for a diameter $q$ the number of sets $\setu$ which include dimension $s$ and have non-zero weight is bounded by $2^{q-1}$. Moreover the smallest index in these sets is $s-q+1$, thus
  \begin{gather*}
    Y_s(k)
    =
    \sum_{\substack{s \in \setu \subseteq \{1:s\} \\ \operatorname{diam}(\setu) \le q}} \gamma_{\setu} \, Y_{\setu \setminus \{s\}}(k)
    =
    \sum_{\setu \subseteq \{\max(1,s-q+1):s-1\}} \gamma_{\setu \cup \{s\}}  \, Y_{\setu}(k)
    ,
    \\
    \text{and }
    Y_{\setu \cup \{s\} \setminus \{s-q+1\}}(k)
    =
    \begin{cases}
    Y_{\setu}(k) \, \omega(x_{k,s}) & \text{when } s < q, \\
    Y_{\setu}(k) \, \omega(x_{k,s}) / \omega(x_{k,s-q+1}) & \text{otherwise} .\\
    \end{cases}
  \end{gather*}
  The number of vectors $Y_{\setu}(k)$ is $2^{q-1}$ leading to a memory cost of $O(2^{q-1} N)$.
  They can be updated by exchanging the new index $s$ with the oldest index $s-q+1$ at a cost of $O(2^{q-2} N)$, but calculating $Y_s(k)$ will also cost $O(2^{q-1} N)$.
\end{itemize}
The \emph{finite-intersection weights} were left out as this constraint on its own is not sufficient to restrict the number of $Y_{\setu}(k)$ vectors to keep.
The same remark holds for plain \emph{finite-order weights}.
An obvious modification is, e.g., \emph{finite-POD weights} of order $q^{*}$ which would result in $O(q^{*}N)$ memory and $O(q^{*}N)$ update cost.

\begin{corollary}
  Fast component-by-component construction for a lattice rule or polynomial lattice rule with $N$ points in $s$ dimensions can be done in time $O(s \, |G| \log |G| +  s \, T N)$ and memory $O(T)$ where $|G| = N$ for a lattice rule or a polynomial lattice rule, and $N^\alpha$ for a higher-order polynomial lattice rule, and $T = 2^s$ for general weights, $T=1$ for product weights, $T=s$ for order-dependent weights and POD weights, $T=q^*$ for finite-order-dependent weights of order~$q^*$ and $T = 2^{q-1}$ for finite-diameter weights of diameter $q$.
\end{corollary}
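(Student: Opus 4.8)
The plan is to assemble the corollary from the per-dimension cost estimates already established in the discussion above and to sum them over the $s$ steps of the component-by-component algorithm. The total running time splits into two independent contributions that I would bound separately: the cost of evaluating the incremental term $\theta_s(z_s)$ simultaneously for every candidate $z_s$, and the cost of maintaining the auxiliary arrays that encode the weight structure when forming $Y_s(k)$.

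First I would treat the convolution. For fixed $s$, evaluating $\theta_s(z_s)$ for all $z_s \in G \setminus \{0\}$ amounts, by\RefEq{eq:convolution}, to computing $\sum_{k=1}^{N-1} Y_s(k) \, \omega(k \cdot z_s)$ for each such $z_s$. The key step is to reindex both factors by the discrete logarithm with respect to a generator $g$ of the cyclic multiplicative group $G \setminus \{0\}$; this group exists precisely because $N$ is prime or $P(\X)$ is irreducible, which is exactly the standing hypothesis. Under this reindexing the whole family of sums becomes a single circular convolution of length $M = |G|-1$, which three FFTs evaluate in $O(M \log M) = O(|G| \log|G|)$. The isolated terms $Y_s(0)\,\omega(0)$ cost $O(N)$ and are absorbed; when $|G| > N$, as for higher-order polynomial lattice rules, the convolution is sparse but the same bound holds, see \cite{BDLNP2012}. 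Summing over the $s$ dimensions yields the first term $O(s\,|G|\log|G|)$.

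Next I would account for the weight-dependent bookkeeping. By the case analysis preceding the corollary, the array $Y_s(k)$ required in\RefEq{eq:Ys} can be assembled from incrementally updated quantities: a single product array for product weights, the order-indexed sums $P_{s,\ell}(k)$ for (finite-)order-dependent and POD weights, and the $2^{q-1}$ sliding-window arrays $Y_\setu(k)$ for finite-diameter weights, with the generic storage of all $Y_\setu(k)$ for unstructured general weights. In each case the auxiliary data consists of $O(T)$ arrays of length $N$, i.e.\ $O(T N)$ storage, for the stated values $T = 2^s, 1, s, q^*, 2^{q-1}$, and both the update of these arrays and the subsequent assembly of $Y_s(k)$ cost $O(T N)$ per dimension. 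Summing over the $s$ steps gives the second time contribution $O(s\,T\,N)$, while the memory is the $O(T)$ length-$N$ arrays (the FFT buffers add a further $O(|G|)$, which is dominated).

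Combining the two contributions gives total time $O(s\,|G|\log|G| + s\,T\,N)$ and memory $O(T)$ (counted in units of length-$N$ arrays), as claimed. Almost every ingredient has already been verified above, so no genuinely new estimate is needed; the only point that carries real content, and hence the one I would present most carefully, is the reduction of the per-dimension evaluation to a length-$M$ circular convolution, which rests entirely on the cyclic structure of $G \setminus \{0\}$ guaranteed by the primality of $N$ or the irreducibility of $P(\X)$ (see also \cite{Nuy2005}). Everything else is a routine tally of the incremental weight updates.
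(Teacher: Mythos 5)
Your proposal is correct and follows essentially the same route as the paper, which offers no separate proof of the corollary but lets it follow directly from the preceding discussion: the FFT-based circular convolution over the cyclic group $G \setminus \{0\}$ costing $O(|G|\log|G|)$ per dimension, plus the itemized per-weight-structure tally giving $O(TN)$ update time and $O(T)$ length-$N$ arrays of storage. You also correctly read the stated memory bound $O(T)$ as being counted in units of length-$N$ vectors, consistent with the $O(2^sN)$, $O(N)$, $O(sN)$, etc.\ figures in the paper's case analysis.
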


The cost and memory constraints of the fast component-by-component algorithm are very reasonable except for higher-order polynomial lattice rules where $|G|$ scales exponentially with~$\alpha$.
A new construction \cite{GD2013} alleviates this problem by constructing interlaced higher-order digital nets based on polynomial lattice rules, see also \cite{DKGNS2013}.

\section{Conclusion}

Only lattice rules and polynomial lattice rules were considered in this manuscript, but a similar analysis can also be done for digital nets.
However, for digital nets the number of choices even in one dimension is already quite high and therefore a component-by-component construction does not make much sense unless extra structure is imposed on the generating matrices.
Like polynomial lattice rules impose a certain structure, so do shift nets \cite{Sch98}, cyclic nets \cite{Nie2004}, hyperplane nets \cite{PDP2006} and Vandermonde nets \cite{HN2013}.

Matlab implementations of the fast component-by-component algorithm can be found in \cite{NC2006-kernels} and \cite{Nuy2007} and the code is available from the author's website\footnote{\url{http://people.cs.kuleuven.be/~dirk.nuyens/fast-cbc/}}.
Also Matlab implementations for using such point sets are available from the author's website\footnote{\url{http://people.cs.kuleuven.be/~dirk.nuyens/qmc-generators/}}.

\DG{}
{
\section*{Acknowledgement}

\myack
}

\bibliographystyle{abbrv} 
\bibliography{mrabbrev,bib-abbrev,extra}

\begin{thebibliography}{10}

\bibitem{AN2012}
N.~Achtsis and D.~Nuyens.
\newblock A component-by-component construction for the trigonometric degree.
\newblock In L.~Plaskota and H.~Wo{\'z}niakowski, editors, {\em {M}onte {C}arlo
  and Quasi-{M}onte {C}arlo Methods 2010}, pages 235--253. Springer-Verlag,
  2012.

\bibitem{BDLNP2012}
J.~Baldeaux, J.~Dick, G.~Leobacher, D.~Nuyens, and F.~Pillichshammer.
\newblock Efficient calculation of the worst-case error and (fast)
  component-by-component construction of higher order polynomial lattice rules.
\newblock {\em Numer. Algorithms}, 59(3):403--431, 2012.

\bibitem{CKN2006}
R.~Cools, F.~Y. Kuo, and D.~Nuyens.
\newblock Constructing embedded lattice rules for multivariate integration.
\newblock {\em SIAM J. Sci. Comput.}, 28(6):2162--2188, 2006.

\bibitem{CKN2010}
R.~Cools, F.~Y. Kuo, and D.~Nuyens.
\newblock Constructing lattice rules based on weighted degree of exactness and
  worst case error.
\newblock {\em Computing}, 87(1-2):63--89, 2010.

\bibitem{CN2008}
R.~Cools and D.~Nuyens.
\newblock A {B}elgian view on lattice rules.
\newblock In A.~Keller, S.~Heinrich, and H.~Niederreiter, editors, {\em {M}onte
  {C}arlo and Quasi-{M}onte {C}arlo Methods 2006}, pages 3--21.
  Springer-Verlag, 2008.

\bibitem{CDLP2007}
L.~L. Cristea, J.~Dick, G.~Leobacher, and F.~Pillichshammer.
\newblock The tent transformation can improve the convergence rate of
  quasi-{M}onte {C}arlo algorithms using digital nets.
\newblock {\em Numer. Math.}, 105(3):413--455, 2007.

\bibitem{Dic2008}
J.~Dick.
\newblock Walsh spaces containing smooth functions and quasi-{M}onte {C}arlo
  rules of arbitrary high order.
\newblock {\em SIAM J. Numer. Anal.}, 46(3):1519--1553, 2008.

\bibitem{DKGNS2013}
J.~Dick, F.~Y. Kuo, Q.~T. Le~Gia, D.~Nuyens, and C.~Schwab.
\newblock Higher order {QMC} {G}alerkin discretization for parametric operator
  equations.
\newblock Submitted, 2013.

\bibitem{DKPS2005}
J.~Dick, F.~Y. Kuo, F.~Pillichshammer, and I.~H. Sloan.
\newblock Construction algorithms for polynomial lattice rules for multivariate
  integration.
\newblock {\em Math. Comp.}, 74(252):1895--1921, 2005.

\bibitem{DKS2013}
J.~Dick, F.~Y. Kuo, and I.~H. Sloan.
\newblock High-dimensional integration: The quasi-{M}onte {C}arlo way.
\newblock {\em Acta Numer.}, 22:133--288, 2013.

\bibitem{DNP2013}
J.~Dick, D.~Nuyens, and F.~Pillichshammer.
\newblock Lattice rules for nonperiodic smooth integrands.
\newblock {\em Numer. Math.}, pages 1--33, 2013.
\newblock In press. Published online, May 2013.

\bibitem{DP2005}
J.~Dick and F.~Pillichshammer.
\newblock Multivariate integration in weighted {H}ilbert spaces based on
  {W}alsh functions and weighted {S}obolev spaces.
\newblock {\em J. Complexity}, 21(2):149--195, 2005.

\bibitem{DP2007}
J.~Dick and F.~Pillichshammer.
\newblock Strong tractability of multivariate integration of arbitrary high
  order using digitally shifted polynomial lattice rules.
\newblock {\em J. Complexity}, 23(4--6):436--453, 2007.

\bibitem{DP2010}
J.~Dick and F.~Pillichshammer.
\newblock {\em Digital Nets and Sequences: Discrepancy Theory and Quasi-Monte
  Carlo Integration}.
\newblock Cambridge University Press, 2010.

\bibitem{DLPW2011}
J.~Dick, F.~Pillichshammer, G.~Larcher, and H.~Wo{\'z}niakowski.
\newblock Exponential convergence and tractability of multivariate integration
  for {K}orobov spaces.
\newblock {\em Math. Comp.}, 80(274):905--930, 2011.

\bibitem{DSWW2006}
J.~Dick, I.~H. Sloan, X.~Wang, and H.~Wo{\'z}niakowski.
\newblock Good lattice rules in weighted {K}orobov spaces with general weights.
\newblock {\em Numer. Math.}, 103(1):63--97, 2006.

\bibitem{Fin49}
N.~J. Fine.
\newblock On the {W}alsh functions.
\newblock {\em Trans. Amer. Math. Soc.}, 65:372--414, 1949.

\bibitem{Gne2012}
M.~Gnewuch.
\newblock Infinite-dimensional integration on weighted {H}ilbert spaces.
\newblock {\em Math. Comp.}, 81(280):2175--2205, 2012.

\bibitem{GD2013}
T.~Goda and J.~Dick.
\newblock Construction of interlaced scrambled polynomial lattice rules of
  arbitrary high order.
\newblock \url{http://arxiv.org/abs/1301.6441}, 2013.

\bibitem{Hic98a}
F.~J. Hickernell.
\newblock A generalized discrepancy and quadrature error bound.
\newblock {\em Math. Comp.}, 67(221):299--322, 1998.

\bibitem{Hic98}
F.~J. Hickernell.
\newblock What affects the accuracy of quasi-{M}onte {C}arlo quadrature?
\newblock In H.~Niederreiter and J.~Spanier, editors, {\em {M}onte {C}arlo and
  Quasi-{M}onte {C}arlo Methods 1998}, pages 16--55. Springer-Verlag, 2000.

\bibitem{Hic2002}
F.~J. Hickernell.
\newblock Obtaining {$O(n^{-2+\epsilon})$} convergence for lattice quadrature
  rules.
\newblock In K.~T. Fang, F.~J. Hickernell, and H.~Niederreiter, editors, {\em
  {M}onte {C}arlo and Quasi-{M}onte {C}arlo Methods 2000}, pages 274--289.
  Springer-Verlag, 2002.

\bibitem{Hla62}
E.~Hlawka.
\newblock Zur angen{\"a}herten {B}erechnung mehrfacher {I}ntegrale.
\newblock {\em Monatsh. Math.}, 66:140--151, 1962.

\bibitem{HN2013}
R.~Hofer and H.~Niederreiter.
\newblock Vandermonde nets.
\newblock \url{http://arxiv.org/abs/1308.1215}, 2013.

\bibitem{Kor59}
N.~M. Korobov.
\newblock The approximate computation of multiple integrals / {A}pproximate
  evaluation of repeated integrals.
\newblock {\em Dokl. Akad. Nauk SSSR}, 124:1207--1210, 1959.
\newblock In Russian. English translation of the theorems in Mathematical
  Reviews by Stroud.

\bibitem{Kor60}
N.~M. Korobov.
\newblock Properties and calculation of optimal coefficients.
\newblock {\em Dokl. Akad. Nauk SSSR}, 132:1009--1012, 1960.
\newblock In Russian. English translation see \cite{Kor60-t}.

\bibitem{Kor60-t}
N.~M. Korobov.
\newblock Properties and calculation of optimal coefficients.
\newblock {\em Soviet Math. Dokl.}, 1:696--700, 1960.

\bibitem{Kor63}
N.~M. Korobov.
\newblock {\em Number-Theoretic Methods in Approximate Analysis}.
\newblock Goz. Izdat. Fiz.-Math., 1963.
\newblock In Russian. English translation of results on optimal coefficients in
  \cite{Str71}.

\bibitem{Kor67}
N.~M. Korobov.
\newblock Some problems in the theory of diophantine approximation.
\newblock {\em Usephkhi Mat. Nauk.}, 22(3):80--118, 1967.
\newblock In Russian. English translation see \cite{Kor67-t}.

\bibitem{Kor67-t}
N.~M. Korobov.
\newblock Some problems in the theory of diophantine approximation.
\newblock {\em Russian Math. Surveys}, 22(3):80--118, 1967.

\bibitem{KP2007}
P.~Kritzer and F.~Pillichshammer.
\newblock Constructions of general polynomial lattices for multivariate
  integration.
\newblock {\em Bull. Austral. Math. Soc.}, 76(1):93--110, 2007.

\bibitem{KPW2013}
P.~Kritzer, F.~Pillichshammer, and H.~Wo{\'z}niakowski.
\newblock Multivariate integration of infinitely many times differentiable
  functions in weighted {K}orobov spaces.
\newblock {\em Math. Comp.}, 2013.
\newblock In press.

\bibitem{KN74}
L.~Kuipers and H.~Niederreiter.
\newblock {\em Uniform Distribution of Sequences}.
\newblock Wiley, New York, 1974.

\bibitem{Kuo2003}
F.~Y. Kuo.
\newblock Component-by-component constructions achieve the optimal rate of
  convergence for multivariate integration in weighted {K}orobov and {S}obolev
  spaces.
\newblock {\em J. Complexity}, 19(3):301--320, 2003.

\bibitem{KSS2012}
F.~Y. Kuo, C.~Schwab, and I.~H. Sloan.
\newblock Quasi-{M}onte {C}arlo finite element methods for a class of elliptic
  partial differential equations with random coefficient.
\newblock {\em SIAM J. Numer. Anal.}, 50(6):3351--3374, 2012.

\bibitem{KS2005}
F.~Y. Kuo and I.~H. Sloan.
\newblock Lifting the curse of dimensionality.
\newblock {\em Notices Amer. Math. Soc.}, 52(11):1320--1328, 2005.

\bibitem{Nie92b}
H.~Niederreiter.
\newblock Low-discrepancy point sets obtained by digital constructions over
  finite fields.
\newblock {\em Czechoslovak Math. J.}, 42(1):143--166, 1992.

\bibitem{Nie92}
H.~Niederreiter.
\newblock {\em Random Number Generation and Quasi-{M}onte {C}arlo Methods}.
\newblock Number~63 in Regional Conference Series in Applied Mathematics. SIAM,
  1992.

\bibitem{Nie2004}
H.~Niederreiter.
\newblock Digital nets and coding theory.
\newblock In K.~Feng, H.~Niederreiter, and C.~Xing, editors, {\em Coding,
  cryptography and combinatorics}, pages 247--257. Birkh{\"a}user, 2004.

\bibitem{NW2008}
E.~Novak and H.~Wo{\'z}niakowski.
\newblock {\em Tractability of Multivariate Problems --- Volume {I}: Linear
  Information}, volume~6 of {\em EMS Tracts in Mathematics}.
\newblock European Mathematical Society Publishing House, 2008.

\bibitem{NW2010}
E.~Novak and H.~Wo{\'z}niakowski.
\newblock {\em Tractability of Multivariate Problems --- Volume {II}: Standard
  Information for Functionals}, volume~12 of {\em EMS Tracts in Mathematics}.
\newblock European Mathematical Society Publishing House, 2010.

\bibitem{NoWo2012}
E.~Novak and H.~Wo{\'z}niakowski.
\newblock {\em Tractability of Multivariate Problems --- Volume {III}: Standard
  Information for Operators}, volume~18 of {\em EMS Tracts in Mathematics}.
\newblock European Mathematical Society Publishing House, 2012.

\bibitem{Nuy2005}
D.~Nuyens.
\newblock Fast construction of a good lattice rule / {A}bout the cover.
\newblock {\em Notices Amer. Math. Soc.}, 52(11):1329 + cover, 2005.

\bibitem{Nuy2007}
D.~Nuyens.
\newblock {\em Fast Construction of Good Lattice Rules}.
\newblock PhD thesis, Dept. of Computer Science, KU~Leuven, 2007.

\bibitem{NC2006-prime}
D.~Nuyens and R.~Cools.
\newblock Fast algorithms for component-by-component construction of rank-$1$
  lattice rules in shift-invariant reproducing kernel {H}ilbert spaces.
\newblock {\em Math. Comp.}, 75(254):903--920, 2006.

\bibitem{NC2006-kernels}
D.~Nuyens and R.~Cools.
\newblock Fast component-by-component construction, a reprise for different
  kernels.
\newblock In H.~Niederreiter and D.~Talay, editors, {\em {M}onte {C}arlo and
  Quasi-{M}onte {C}arlo Methods 2004}, pages 371--385. Springer-Verlag, 2006.

\bibitem{NC2006}
D.~Nuyens and R.~Cools.
\newblock Fast component-by-component construction of rank\mbox{-$1$} lattice
  rules with a non-prime number of points.
\newblock {\em J. Complexity}, 22(1):4--28, 2006.

\bibitem{Pil2012}
F.~Pillichshammer.
\newblock Polynomial lattice point sets.
\newblock In L.~Plaskota and H.~Wo{\'z}niakowski, editors, {\em {M}onte {C}arlo
  and Quasi-{M}onte {C}arlo Methods 2010}, pages 189--210. Springer-Verlag,
  2012.

\bibitem{PDP2006}
G.~Pirsic, J.~Dick, and F.~Pillichshammer.
\newblock Cyclic digital nets, hyperplane nets, and multivariate integration in
  {S}obolev spaces.
\newblock {\em SIAM J. Numer. Anal.}, 44(1):385--411, 2006.

\bibitem{Sch98}
W.~C. Schmid.
\newblock Shift-nets: A new class of binary digital $(t, m, s)$-nets.
\newblock In H.~Niederreiter, P.~Hellekalek, G.~Larcher, and P.~Zinterhof,
  editors, {\em {M}onte {C}arlo and Quasi-{M}onte {C}arlo Methods 1996}, pages
  369--381. Springer-Verlag, 1998.

\bibitem{SJ94}
I.~H. Sloan and S.~Joe.
\newblock {\em Lattice Methods for Multiple Integration}.
\newblock Oxford Science Publications, 1994.

\bibitem{SKJ2001}
I.~H. Sloan, F.~Y. Kuo, and S.~Joe.
\newblock Constructing randomly shifted lattice rules in weighted {S}obolev
  spaces.
\newblock {\em SIAM J. Numer. Anal.}, 40(5):1650--1665, 2002.

\bibitem{SKJ2002}
I.~H. Sloan, F.~Y. Kuo, and S.~Joe.
\newblock On the step-by-step construction of quasi-{M}onte {C}arlo integration
  rules that achieve strong tractability error bounds in weighted {S}obolev
  spaces.
\newblock {\em Math. Comp.}, 71(240):1609--1640, 2002.

\bibitem{SR2002}
I.~H. Sloan and A.~V. Reztsov.
\newblock Component-by-component construction of good lattice rules.
\newblock {\em Math. Comp.}, 71(237):263--273, 2002.

\bibitem{SW98}
I.~H. Sloan and H.~Wo{\'z}niakowski.
\newblock When are quasi-{M}onte {C}arlo algorithms efficient for high
  dimensional integrals?
\newblock {\em J. Complexity}, 14(1):1--33, 1998.

\bibitem{Str71}
A.~H. Stroud.
\newblock {\em Approximate Calculation of Multiple Integrals}.
\newblock Automatic Computation. Prentice-Hall, 1971.

\bibitem{Wal23}
J.~L. Walsh.
\newblock A closed set of normal orthogonal functions.
\newblock {\em Amer. J. Math.}, 55:5--24, 1923.

\bibitem{Was2013}
G.~W. Wasilkowski.
\newblock On tractability of linear tensor product problems for
  $\infty$-variate classes of functions.
\newblock {\em J. Complexity}, 29(5):351--369, 2013.

\bibitem{Woz2009}
H.~Wo{\'z}niakowski.
\newblock Tractability of multivariate integration for weighted {K}orobov
  spaces.
\newblock In P.~L'{\'E}cuyer and A.~B. Owen, editors, {\em {M}onte {C}arlo and
  Quasi-{M}onte {C}arlo Methods 2008}, pages 637--653. Springer-Verlag, 2009.

\end{thebibliography}

\end{document}